\documentclass[11pt]{amsart}
\usepackage{graphicx}
\usepackage{amssymb}
\usepackage{amsmath}

\textwidth = 6.5 in \textheight = 9 in \oddsidemargin = 0.0 in
\evensidemargin = 0.0 in \topmargin = -0.2 in \headheight = 0.2 in
\headsep = 0.2 in
\parskip = 0.2in
\parindent = 0.0in

\theoremstyle{plain}
\newtheorem{theorem}{Theorem}[section]
\newtheorem{lemma}[theorem]{Lemma}
\newtheorem{claim}[theorem]{Claim}

\newtheorem{corollary}[theorem]{Corollary}
\newtheorem{corollary*}[theorem]{Corollary}
\newtheorem{proposition}[theorem]{Proposition}

\theoremstyle{definition}
\newtheorem{definition}[theorem]{Definition}

\theoremstyle{remark}
\newtheorem*{remark}{Remark}

\def\d{\displaystyle}
\def\z{\hat{\zeta}}
\def\tz{\zeta}
\def\C{\mathcal{C}}
\def\O{\text{ Ord}}
\def\lo{\underline{o}}
\def\bigz{\mathbb{Z}}
\newcommand\gl{\mathfrak{gl}}
\def\comp{\circ}

\begin{document}

\normalsize

\title{Calculation of local formal Fourier transforms}
\author{Adam Graham-Squire}
\date{}

 \begin{abstract} We calculate the local Fourier transforms for connections on the formal punctured disk, reproducing the results of J. Fang \cite{fang} and C. Sabbah \cite{sabbah} using a different method. Our method is similar to Fang's, but more direct.
 \end{abstract}

\maketitle

\section{Introduction}\label{intro}

In \cite{bloch}, S.~Bloch and H.~Esnault introduced the local Fourier transforms for connections on the formal punctured disk. In \cite{garcialopez}, R.~Garcia Lopez found similar results to \cite{bloch} using a different method.  Neither \cite{bloch} nor \cite{garcialopez} gave explicit calculations for the local Fourier transforms, however.
Explicit formulas were proved by J.~Fang \cite{fang} and C.~Sabbah \cite{sabbah}. Interestingly, the calculations rely
on different ideas: the proof of \cite{fang} is more algebraic, while \cite{sabbah} uses geometric methods.

In this paper, we provide yet another proof of these formulas. Our approach is closer to Fang's, but more straightforward. In order to
calculate a particular local Fourier transform, one must ascertain the `canonical form' of the local Fourier transform of a given connection.
This amounts to constructing an isomorphism between two connections (on a punctured formal disk). In \cite{fang}, this is done
by writing matrices of the connections with respect to certain bases. We work with operators directly, using techniques
described by D. Arinkin in \cite[Section 7]{dima}.

\subsection*{Acknowledgements}\label{subsec ack} I am very grateful to my advisor Dima Arinkin for many helpful discussions and his consistent encouragement of this work.

\section{Definitions and Conventions}\label{sec def}

We fix a ground field $\Bbbk$, which is assumed to be algebraically closed of characteristic zero.

\subsection{Connections on formal disks}\label{subsec conn}
Consider the field of formal Laurent series $K=\Bbbk((z))$.

\begin{definition} \label{connection} Let $V$ be a finite-dimensional vector space over $K$. A \emph{connection} on $V$ is a $\Bbbk$-linear operator
$\nabla\colon V\to V$ satisfying the Leibniz identity:
\[\nabla(fv)=f\nabla(v)+\frac{df}{dz}v\]
for all $f\in K$ and $v\in V$. A choice of basis in $V$ gives an isomorphism $V\simeq K^n$; we can then write $\nabla=\nabla_z$ as $\frac{d}{dz}+A$, where
$A=A(z)\in\gl_n(K)$ is the \emph{matrix} of $\nabla$ with respect to this basis.
\end{definition}

We write $\mathcal{C}$ for the category of vector spaces with connections over $K$. Its objects are pairs
$(V, \nabla)$, where $V$ is a finite-dimensional $K$-vector space and $\nabla\colon V\to V$ is a connection. Morphisms between
$(V_1,\nabla_1)$ and $(V_2,\nabla_2)$ are $K$-linear maps $\phi\colon V_1\to V_2$ that are \emph{horizontal} in the sense that
$\phi\nabla_1=\nabla_2\phi$.

 We summarize below some well-known
properties of connections on formal disks. The results go back to Turrittin \cite{turrittin} and Levelt \cite{levelt};
more recent references include \cite{varadar}, \cite[Sections 5.9 and 5.10]{beilinson}, \cite{malgrange}, and \cite{vander}.

Let $q$ be a positive integer and consider the field $K_q=\Bbbk((z^{1/q}))$. Note that $K_q$ is the unique extension of $K$ of degree $q$.
For every $f\in K_q$, we define an object $E_f\in\mathcal{C}$ by
\[E_f=E_{f,q}=\left(K_q,\frac{d}{dz}+z^{-1}f\right).\]

In terms of the isomorphism class of an object $E_f$, the reduction procedures of \cite{turrittin} and \cite{levelt} imply that we need only consider $f$ in the quotient
\begin{equation}\label{conn equation}
\Bbbk((z^{1/q}))\Big/\left(z^{1/q}\Bbbk[[z^{1/q}]]+\frac{1}{q}\mathbb{Z}\right)
\end{equation}
where $\Bbbk[[z]]$ denotes formal \emph{power} series.

Let $R_q$ (we write $R_q(z)$ when we wish to emphasize the local coordinate) be the set of orbits for the action of the Galois group $\mathrm{Gal}(K_q/K)$ on the quotient\label{conn quotient}. Explicitly, the
Galois group is identified with the group of degree $q$ roots of unity $\eta\in \Bbbk$; the action on $f\in R_q$ is by 
$f(z^{1/q})\mapsto f(\eta z^{1/q})$. Finally, let $R^\comp_q\subset R_q$ denote the set of $f\in R_q$ that cannot be represented 
by elements of $K_r$ for any $0<r<q$.

\begin{remark}
  $R^\comp_q$ can alternatively be described as the locus of $R_q$ where Gal($K_q/K$) acts freely.
\end{remark}

The following proposition lists some well-known facts about the objects $E_f$.  The proofs of the different parts of the proposition are either straightforward or common in the literature, and are thus omitted.

\begin{proposition}\label{prop} \mbox{}
\begin{enumerate}
\item \label{prop1} The isomorphism class of $E_f$ depends only on the orbit of the image of $f$ in $R_q$.
\item \label{prop2} $E_f$ is irreducible if and only if the image of $f$ in $R_q$ belongs to $R^\comp_q$. As $q$ and $f$ vary,
we obtain a complete list of irreducible objects of $\mathcal{C}$.
\item \label{prop3}Every $E\in\mathcal{C}$ can be written as
\[E\simeq\bigoplus_i(E_{f_i,q_i}\otimes J_{m_i}),\]
where the $E_{f,q}$ are irreducible and $J_m=(K^m,\frac{d}{dz}+z^{-1}N_m)$, with $N_m$ representing the nilpotent Jordan block of size $m$.
\end{enumerate}
\end{proposition}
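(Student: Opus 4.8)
Below is the plan I would follow. Parts (1) and (2) are elementary gauge-theoretic manipulations, while part (3) is the substantial Turrittin--Levelt structure theorem; I treat them in that order, noting that the nontrivial direction of (2) will quote (3), which does not depend on it.

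\emph{Part (1).} I would exhibit two families of isomorphisms. First, for $g\in K_q^\times$, multiplication by $g$ is a horizontal isomorphism $E_{f+zg^{-1}g'}\to E_f$, since conjugating $\frac{d}{dz}+z^{-1}f$ by multiplication-by-$g$ adds $z\,g^{-1}\frac{dg}{dz}$ to the matrix. Taking $g=z^{n/q}$ gives $z\,g^{-1}g'=n/q$, so $f$ may be shifted by $\frac1q\bigz$; taking $g=\exp(h)$ with $h\in z^{1/q}\Bbbk[[z^{1/q}]]$ gives $z\,g^{-1}g'=z\,h'$, and as $h$ varies this ranges over all of $z^{1/q}\Bbbk[[z^{1/q}]]$ (integrate term by term — since $h$ has no constant term, no $z^{-1}$ obstruction arises). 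Hence the isomorphism class of $E_f$ depends only on the image of $f$ in the quotient \eqref{conn equation}. Second, any $\sigma\in\mathrm{Gal}(K_q/K)$ is $K$-linear and commutes with $\frac{d}{dz}$ (uniqueness of the extension of the derivation to the separable extension $K_q$), so it is a horizontal isomorphism $E_f\to E_{\sigma f}$ because $\sigma\bigl(\frac{d}{dz}+z^{-1}f\bigr)\sigma^{-1}=\frac{d}{dz}+z^{-1}\sigma(f)$. Combining, the class depends only on the image of $f$ in $R_q$.

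\emph{Part (2).} For one direction, suppose $f$ is represented by an element of $K_r$ with $r\mid q$, $r<q$ (equivalently, by the Remark, $\mathrm{Gal}(K_q/K)$ does not act freely on its orbit). Decompose $K_q=\bigoplus_{j=0}^{q/r-1}K_r\cdot z^{j/q}$; since $\frac{d}{dz}(z^{j/q})=\frac jq z^{-1}z^{j/q}$, each summand is stable under $\frac{d}{dz}+z^{-1}f$ with restriction $\frac{d}{dz}+z^{-1}(f+\tfrac jq)$, so $E_{f,q}\simeq\bigoplus_{j=0}^{q/r-1}E_{f+j/q,\,r}$, a sum of $q/r\ge2$ objects; iterating, $E_{f,q}$ is a direct sum of several objects, hence reducible. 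For the converse, that $f\in R^\comp_q$ implies $E_{f,q}$ irreducible, I would compute $\mathrm{Hom}_\C(E_{f,q},E_{f',q'})$: via the moves of Part (1) this reduces to $\mathrm{End}_\C(E_{f,q})$, and one checks directly that a horizontal endomorphism of $(K_q,\frac{d}{dz}+z^{-1}f)$ is multiplication by a pole-free Galois-invariant element of $K_q$, hence a scalar when the stabilizer is trivial; in particular $\mathrm{End}_\C(E_{f,q})$ is local, so $E_{f,q}$ is indecomposable, and since by Part (3) its simple subquotients are again of the form $E_{g,q''}$, comparison of exponential parts and ramification forces $E_{f,q}$ to be simple. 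For the ``complete list'' claim: any irreducible $E$ is, by Part (3), of the form $E_{f,q}\otimes J_m$ with $E_{f,q}$ irreducible, and $J_m$ has a proper nonzero subobject for $m\ge2$ (and $E_{f,q}\otimes(-)$ is exact), so irreducibility forces $m=1$; distinct $f\in R^\comp_q$ give non-isomorphic $E_{f,q}$ by the $\mathrm{Hom}$ computation.

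\emph{Part (3).} This is Turrittin--Levelt. I would first observe that $\C$ is $\Bbbk$-linear abelian with finite-dimensional $\mathrm{Hom}$-spaces, hence Krull--Schmidt, and reduce to $E$ indecomposable. The core is the Hukuhara--Turrittin lemma: there is a ramification index $q$ so that the pullback $\rho_q^*E$ to $K_q$ (base change along $K\hookrightarrow K_q$, connection taken with respect to $w=z^{1/q}$) splits as $\bigoplus_i L_i\otimes M_i$ with $L_i$ rank one and $M_i$ regular singular over $K_q$. I would prove this by induction on $\operatorname{rank}E$ and on the Poincar\'e rank: if the leading coefficient matrix has two distinct eigenvalues, split $E$ along the associated idempotent (the lifting obstruction vanishes in characteristic zero), dropping the rank; if it has a single nonzero eigenvalue, twist by a rank-one object to lower the Poincar\'e rank; if a single zero eigenvalue with nonzero nilpotent part — or a non-integral slope — occurs, apply a shearing transformation or pass to a further ramified cover. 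Regular singular objects over $K_q$ are then classified by the conjugacy class of the residue of an invariant lattice (after normalizing residue eigenvalues to have no nonzero integral differences), giving $M_i\simeq\bigoplus_k E^{(w)}_{\alpha_{ik}}\otimes J^{(w)}_{m_{ik}}$, and each $L_i$ is, up to the gauge moves of Part (1), an $E^{(w)}_{g_i}$. Thus $\rho_q^*E\simeq\bigoplus_i E^{(w)}_{g_i}\otimes J^{(w)}_{m_i}$. Finally I would descend: using $\rho_q^*J^{(z)}_m\simeq J^{(w)}_m$ (a constant gauge transformation) and the projection formula, $\rho_{q*}\rho_q^*E\simeq\bigoplus_i\bigl(\rho_{q*}E^{(w)}_{g_i}\bigr)\otimes J_{m_i}$, and a direct computation identifies $\rho_{q*}E^{(w)}_{g_i}$ with $E^{(z)}_{g_i/q,\,q}$, which by Parts (1)--(2) is a direct sum of irreducibles $E_{f,q'}$. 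Since the unit of the adjunction $\rho_q^*\dashv\rho_{q*}$ is split ($q$ being invertible in $\Bbbk$), $E$ is a direct summand of $\rho_{q*}\rho_q^*E$, and Krull--Schmidt concludes. The hard part of the whole proposition is exactly this Hukuhara--Turrittin lemma — the inductive splitting after a ramified base change, and in particular the shearing step needed when the leading term is nilpotent but not of lower order; the rest is linear algebra, formal power-series manipulation, and Krull--Schmidt bookkeeping.
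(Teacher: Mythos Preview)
The paper gives no proof of this proposition: immediately before stating it, the author writes that ``the proofs of the different parts of the proposition are either straightforward or common in the literature, and are thus omitted,'' and points to Turrittin, Levelt, and several later expositions. So there is no argument in the paper to compare yours against; your outline is exactly the standard proof those references contain.

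One imprecision worth flagging: in Part~(2) you say a horizontal endomorphism of $E_{f,q}$ is ``multiplication by a pole-free Galois-invariant element of $K_q$.'' Morphisms in $\C$ are only $K$-linear, not $K_q$-linear, so a general $K$-linear endomorphism of $K_q$ has the form $\sum_{\sigma\in\mathrm{Gal}(K_q/K)} g_\sigma\,\sigma$ with $g_\sigma\in K_q$; horizontality then forces $z\,g_\sigma'/g_\sigma=\sigma(f)-f$ for each $\sigma$ with $g_\sigma\neq0$. For $\sigma\neq1$ this has no solution when the stabilizer of $f$ in $R_q$ is trivial (the left-hand side always lands in $\frac1q\bigz+z^{1/q}\Bbbk[[z^{1/q}]]$), and for $\sigma=1$ it forces $g_1\in\Bbbk$. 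So your conclusion $\mathrm{End}_\C(E_{f,q})=\Bbbk$ is correct, but the computation is not quite what you described. The rest --- the non-circular ordering of (2) and (3), and the Turrittin--Levelt induction in (3) followed by descent via the split unit of $\rho_q^*\dashv\rho_{q*}$ and Krull--Schmidt --- is the standard argument and is correctly outlined.
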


\begin{remark}
Proposition \ref{prop} \eqref{prop3} is particularly useful because it allows us to reduce the calculation of the local Fourier transform of $E\in\mathcal{C}$ to looking at the calculation on $E_f$.  A precise statement is found in Corollary \ref{corollary}.
\end{remark}

\subsection{Local Fourier transforms}\label{subsec LFT}

 Sometimes it is useful to keep track of the choice of local coordinate for $\mathcal{C}$.  To stress the coordinate, we write $\mathcal{C}_0$ to indicate the coordinate $z$ at the point zero and $\mathcal{C}_{\infty}$ to indicate the coordinate $\zeta=\frac{1}{z}$ at the point at infinity.  Note that $\mathcal{C}_0$ and $\mathcal{C}_{\infty}$ are both isomorphic to $\mathcal{C}$, but not canonically.  We also let $\mathcal{C}_{\infty}^{<1}$ (respectively $\mathcal{C}_{\infty}^{>1}$) denote the full subcategory of $\mathcal{C}_{\infty}$ of connections whose irreducible components all have slopes less than one (respectively greater than one); that is, $E_f$ such that $-1< \text{ord}(f)$ (respectively $-1>\text{ord}(f)$).

\begin{definition}
We define the local Fourier transforms $\mathcal{F}^{(0,\infty)}$, $\mathcal{F}^{(\infty,0)}$ and $\mathcal{F}^{(\infty,\infty)}$ using the relations given in \cite[Propositions 3.7, 3.9 and 3.12]{bloch} while following the convention of \cite[Section 2.2]{dima}.  We let the Fourier transform coordinate of $z$ be $\hat{z}$, with $\z=\frac{1}{\hat{z}}$. Let $E=(V,\nabla_z)\in\mathcal{C}_0$ such that $\nabla_z$ has no horizontal sections, thus $\nabla_z$ is invertible.  The following is a precise definition for $\mathcal{F}^{(0,\infty)}(E)$; the other local Fourier transforms can be defined analogously and thus precise definitions are omitted.  Consider on $V$ the $\Bbbk$-linear operators
\begin{equation}\label{fourierdef}
	\z=-\nabla_z^{-1}\colon V\to V \text{ and } \hat{\nabla}_{\z}=-\z^{-2}z\colon V \to V.
\end{equation}
As in \cite{dima}, $\z$ extends to define an action of $\Bbbk((\z))$ on $V$ and dim$_{\Bbbk((\z))}V<\infty$. We write $V_{\z}$ to indicate that we are considering $V$ as a $\Bbbk((\z))$-vector space. Then $\hat{\nabla}_{\z}$ is a connection, and the $\Bbbk((\z))$-vector space $V_{\z}$ with connection $\hat{\nabla}_{\z}$ is denoted by
\[ \mathcal{F}^{(0,\infty)}(E):=(V_{\z},\hat{\nabla}_{\z})\in \mathcal{C}_{\infty}^{<1},\]
which defines the functor $\mathcal{F}^{(0,\infty)}\colon \mathcal{C}_0 \to \mathcal{C}_{\infty}^{<1}$.
\end{definition}

Given the conventions above, we can express the other local Fourier transforms by the functors
$$\mathcal{F}^{(\infty,0)}\colon \mathcal{C}_{\infty}^{<1} \to \mathcal{C}_0 \text{   and   }\mathcal{F}^{(\infty,\infty)}\colon \mathcal{C}_{\infty}^{>1}\to \mathcal{C}_{\infty}^{>1}.$$

If one considers only the full subcategories of $\mathcal{C}_0$ and $\mathcal{C}_{\infty}^{<1}$ of connections with no horizontal sections, the functors $\mathcal{F}^{(0,\infty)}$ and $\mathcal{F}^{(\infty,0)}$ define an equivalence of categories.  Similarly, $\mathcal{F}^{(\infty,\infty)}$ is an auto-equivalence of the subcategory $\mathcal{C}_{\infty}^{>1}$ \cite[Propositions 3.10 and 3.12]{bloch}.

\section{Statement of theorems}\label{state thms}

Let $s$ be a nonnegative integer and $r$ a positive integer.

\subsection{Calculation of $\mathcal{F}^{(0,\infty)}$}\label{subsec calc1}

\begin{theorem}\label{thm1} Let $f\in R^\comp_r(z)$ with ord$(f)=-s/r$ and $f\neq 0$.  Then $E_f\in \C_0$ and
\[\mathcal{F}^{(0,\infty)}(E_f)\simeq E_g,\]
where $g\in {R^\comp_{r+s}(\z)}$ is determined by the following system of equations:
\begin{equation}\label{zisyseq1}f=-z\hat{z}
\end{equation}
\begin{equation}\label{zisyseq2} g=f+\frac{s}{2(r+s)}
\end{equation}
\end{theorem}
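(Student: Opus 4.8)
The plan is to work directly with the operators defining $\mathcal{F}^{(0,\infty)}(E_f)$, following Arinkin's approach, and to exhibit an explicit $\Bbbk((\hat\zeta))$-basis of $V_{\hat\zeta}$ with respect to which $\hat\nabla_{\hat\zeta}$ takes the form $\frac{d}{d\hat\zeta}+\hat\zeta^{-1}g$ for the $g$ of \eqref{zisyseq2}. Concretely, $E_f=(K_r,\frac{d}{dz}+z^{-1}f)$ with $f\in z^{-s/r}\Bbbk[[z^{1/r}]]$ having leading term a nonzero multiple of $z^{-s/r}$ (using $s>0$, since $f\ne 0$ and $\mathrm{ord}(f)=-s/r$; the case $s=0$, i.e.\ a regular singular $E_f$, should be handled separately and is easy). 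The operator $\hat\zeta=-\nabla_z^{-1}$ makes $V=K_r$ into a vector space over $\Bbbk((\hat\zeta))$; the first task is to compute $\mathrm{ord}_{\hat\zeta}$ in terms of $\mathrm{ord}_z$ and to verify that $\dim_{\Bbbk((\hat\zeta))}V=r+s$, so that the transform indeed has rank $r+s$. From $\hat\zeta=-\nabla_z^{-1}$ one gets $\nabla_z\hat\zeta=-1$, equivalently $\hat\zeta^{-1}=-\nabla_z$ as operators, and then a Newton–polygon / valuation computation shows that $\hat\zeta$ acts with the expected fractional order, matching the substitution $\hat\zeta^{1/(r+s)}\leftrightarrow$ a suitable fractional power of $z$; this is where equation \eqref{zisyseq1}, $f=-z\hat z$, will emerge, since $\hat z=\hat\zeta^{-1}=-\nabla_z$ and modulo lower-order terms $\nabla_z$ acts like $z^{-1}f$, giving $\hat z\sim z^{-1}f$, i.e.\ $-z\hat z\sim f$.

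The main computation is then to conjugate $\hat\nabla_{\hat\zeta}=-\hat\zeta^{-2}z$ into canonical form. The strategy: pick the obvious generator $1\in K_r=V$ (or a small modification of it), and repeatedly apply $\hat\zeta$ to express everything in a basis adapted to the $\Bbbk((\hat\zeta))$-structure; then compute $\hat\nabla_{\hat\zeta}$ of a cyclic vector and read off its matrix. Because $-\hat\zeta^{-2}z$ is multiplication by the function $-\hat\zeta^{-2}z$ twisted by nothing (it is $\Bbbk$-linear but not $K$-linear—wait, $z$ is an element of $K_r$ so this is genuinely $K_r$-linear, hence the "connection" structure is entirely in how $\hat\zeta$ fails to commute), the real content is the identity $[\hat\nabla_{\hat\zeta},\hat\zeta]=$ (derivative relation), which one checks from the definitions: $\hat\nabla_{\hat\zeta}$ must satisfy Leibniz over $\Bbbk((\hat\zeta))$, and one verifies $\hat\nabla_{\hat\zeta}(\hat\zeta v)-\hat\zeta\hat\nabla_{\hat\zeta}(v)=v$. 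Granting that, writing $z$ in terms of $\hat\zeta$ via the inverse of \eqref{zisyseq1} and a Puiseux expansion, one expands $-\hat\zeta^{-2}z$ as an element of $\hat\zeta^{-1}\Bbbk[[\hat\zeta^{1/(r+s)}]]$ (after the change of variable that identifies $V_{\hat\zeta}$ with $K_{r+s}(\hat\zeta)$), and the leading and subleading terms give exactly $\hat\zeta^{-1}g$ with $g=f+\frac{s}{2(r+s)}$ — the constant $\frac{s}{2(r+s)}$ being the "anomaly" coming from the $\hat\zeta^{-2}$ and the Jacobian of the substitution $z\leftrightarrow\hat\zeta$ (this is the familiar half-integer shift, analogous to the $\sqrt{dz}$ twist in stationary phase).

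The step I expect to be the main obstacle is pinning down that constant shift $\frac{s}{2(r+s)}$ and, relatedly, making the change of variables rigorous at the level of operators rather than just functions: one must be careful that $\hat\nabla_{\hat\zeta}$ is not simply multiplication by $-\hat\zeta^{-2}z$ once $V$ is re-expressed in a $\hat\zeta$-basis, because the basis vectors themselves are not horizontal and $z$ does not commute with $\hat\zeta$. The clean way is to find a single good cyclic vector $v_0$, express $\hat\nabla_{\hat\zeta}v_0$ as a $\Bbbk((\hat\zeta))$-combination of $v_0,\hat\zeta v_0,\dots$, and then diagonalize/reduce using Proposition \ref{prop} together with the fact that the target lies in $R^\comp_{r+s}$ (irreducibility, guaranteed because $\gcd$ considerations force $r+s$ to be the true ramification, which also uses that $\mathcal{F}^{(0,\infty)}$ lands in $\mathcal{C}_\infty^{<1}$ so $\mathrm{ord}(g)=\mathrm{ord}(f)$ indeed lies in $(-1,0)$ after rescaling — here $\mathrm{ord}_{\hat\zeta}(g)=-s/(r+s)\in(-1,0)$, consistent). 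Once the leading term of the matrix of $\hat\nabla_{\hat\zeta}$ is shown to be a single Jordan-free block with exponential part $g$, Proposition \ref{prop}\eqref{prop2} identifies the result as $E_g$, completing the proof. The term tracking for the $\frac{s}{2(r+s)}$ shift is ultimately a finite, explicit Puiseux-coefficient calculation, but it is the one place where sign and normalization conventions (those fixed in the definition of $\mathcal{F}^{(0,\infty)}$ via $\hat\nabla_{\hat\zeta}=-\hat\zeta^{-2}z$) must be handled with care.
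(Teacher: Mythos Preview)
Your overall plan is in the right spirit and matches the paper's philosophy: work directly with the operators $\hat\zeta=-\nabla_z^{-1}$ and $\hat\nabla_{\hat\zeta}=-\hat\zeta^{-2}z$ on $V=K_r$, separate out the regular case $s=0$, and recognize that the constant $\tfrac{s}{2(r+s)}$ is the only nontrivial content beyond the naive substitution $f=-z\hat z$. You also correctly flag the exact obstacle: $z$ and $\hat\zeta$ do not commute, so ``expanding $-\hat\zeta^{-2}z$ as a Puiseux series in $\hat\zeta$'' is not a priori meaningful at the operator level.

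Where you part from the paper is that you do not supply a mechanism to resolve that obstacle, and this is the genuine gap. The paper's device is an operator fractional-power lemma (Lemmas~\ref{abintlemma} and~\ref{abratlemma}): with $A=z^{-1}f$ and $B=\frac{d}{dz}$, so $-\hat\zeta^{-1}=A+B$, one defines $(A+B)^m$ for $m\in\frac{1}{r+s}\mathbb{Z}$ via Arinkin's construction \cite[\S7.1]{dima} and proves the binomial-type expansion
\[
(A+B)^m = A^m + mA^{m-1}B + \tfrac{m(m-1)}{2}A^{m-2}[B,A] + \underline{o}\big(z^{(p/q)(m-1)+n-1}\big).
\]
Taking $m=-\tfrac{r}{r+s}$ expresses the \emph{operator} $(-\hat\zeta)^{r/(r+s)}$ explicitly in $z$, from which the operator $z$ is written back in $\hat\zeta$; the correction $\tfrac{s}{2(r+s)}$ drops out of the commutator term $\tfrac{m(m-1)}{2}A^{m-2}[B,A]$. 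One then applies both sides of $z=-\hat\zeta^2\frac{d}{d\hat\zeta}-\hat\zeta g$ to $1\in K_r$ (killing $\frac{d}{d\hat\zeta}$) and reads off $g$. Without this lemma, your statement that ``the term tracking for the shift is ultimately a finite, explicit Puiseux-coefficient calculation'' is not justified: the calculation is with noncommuting operators, not Puiseux series, and the shift arises precisely from the commutator, which a scalar Puiseux inversion cannot see.

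Your fallback---choose a cyclic vector, compute the matrix of $\hat\nabla_{\hat\zeta}$ in the resulting $\Bbbk((\hat\zeta))$-basis, and reduce via Proposition~\ref{prop}---is essentially Fang's approach, which the paper explicitly sets out to bypass; it can be made to work, but you do not carry it out. (A small correction in passing: $-\hat\zeta^{-2}z$ is not ``genuinely $K_r$-linear''---multiplication by $z$ is, but $\hat\zeta^{-2}=\nabla_z^{2}$ is not.) As written, then, the proposal is an accurate diagnosis of what must be done but lacks the key lemma that actually does it.
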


\begin{remark} Recall that $\d{\z=\frac{1}{\hat{z}}}$.  We determine $g$ using \eqref{zisyseq1} and \eqref{zisyseq2} as follows.
First, using \eqref{zisyseq1} we express $z$ in terms of $\z^{1/(r+s)}$.  We then substitute that expression for $z$ into \eqref{zisyseq2} and solve to get an expression for $g(\z)$ in terms of $\z^{1/(r+s)}$.

When we use \eqref{zisyseq1} to write an expression for $z$ in terms of $\z^{1/(r+s)}$, the expression is not unique since we must make a choice of a root of unity.  More concretely, let $\eta$ be a primitive $(r+s)^{\text{th}}$ root of unity.  Then replacing $\z^{1/(r+s)}$ with $\eta\z^{1/(r+s)}$ in our equation for $z$ will yield another possible expression for $z$.  This choice will not affect the overall result, however, since all such expressions will lie in the same Galois orbit.  Thus by Proposition \ref{prop} \eqref{prop1}, they all correspond to the same connection.
\end{remark}

\begin{corollary}\label{corollary}

Let $E$ be an object in $\mathcal{C}$.  By Proposition \ref{prop} (3), let $E$ have decomposition
$\d{E\simeq \bigoplus_i \bigg(E_{f_i}\otimes J_{m_i}\bigg).}$
Then
$$\mathcal{F}^{(0, \infty)}(E)\simeq \bigoplus_i \bigg(E_{g_i}\otimes J_{m_i}\bigg)$$for $E_{g_i}=\mathcal{F}^{(0, \infty)}(E_{f_i})$ as defined in Theorem \ref{thm1}.
\end{corollary}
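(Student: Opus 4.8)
The plan is to reduce the statement to Theorem~\ref{thm1} by exploiting the additivity of $\mathcal{F}^{(0,\infty)}$ together with its compatibility with tensoring by the regular-singular blocks $J_m$. First I would observe that $\mathcal{F}^{(0,\infty)}$ is an additive functor: this follows directly from the definition, since the operators $\z=-\nabla_z^{-1}$ and $\hat\nabla_{\z}=-\z^{-2}z$ are built functorially from $(V,\nabla_z)$, and a direct sum decomposition $V\simeq\bigoplus_i V_i$ with $\nabla_z$ preserving each summand induces the corresponding decomposition of $V_{\z}$ as a $\Bbbk((\z))$-vector space compatible with $\hat\nabla_{\z}$. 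Hence it suffices to compute $\mathcal{F}^{(0,\infty)}(E_{f_i}\otimes J_{m_i})$ for each $i$.

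The key step is then to show
\[
\mathcal{F}^{(0,\infty)}(E_{f}\otimes J_{m})\simeq \mathcal{F}^{(0,\infty)}(E_{f})\otimes J_{m}
\]
for any irreducible $E_f$ (with $f\neq 0$, so that $E_f\otimes J_m$ has no horizontal sections and the transform is defined) and any $m$. To see this I would write $E_f\otimes J_m$ concretely: its underlying space is $K_q^m$ with connection $\frac{d}{dz}+z^{-1}f\cdot I_m + z^{-1}N_m$, i.e.\ an $f$-isotypic part carrying a single nilpotent Jordan block of size $m$ in the "$z^{-1}\,d/dz$" direction. Passing to the Fourier transform, $\z=-\nabla_z^{-1}$ and $\hat\nabla_{\z}=-\z^{-2}z$ are computed on this space; the point is that the nilpotent $N_m$-part is preserved under the algebraic manipulations defining the transform, because tensoring by $J_m$ amounts to working over the ring $\Bbbk((z))[\epsilon]/(\epsilon^m)$ (with $\epsilon$ acting as the nilpotent) and the whole construction of $\mathcal{F}^{(0,\infty)}$ is linear over this base ring. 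Concretely, one checks that the $\Bbbk((\z))$-vector space structure and the connection $\hat\nabla_{\z}$ on $\mathcal{F}^{(0,\infty)}(E_f\otimes J_m)$ differ from those on $\mathcal{F}^{(0,\infty)}(E_f)$ exactly by the same nilpotent block, which is the content of the asserted isomorphism. Combining this with Theorem~\ref{thm1}, which identifies $\mathcal{F}^{(0,\infty)}(E_{f_i})\simeq E_{g_i}$, gives $\mathcal{F}^{(0,\infty)}(E_{f_i}\otimes J_{m_i})\simeq E_{g_i}\otimes J_{m_i}$, and summing over $i$ yields the corollary.

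The main obstacle I anticipate is making the statement "$\mathcal{F}^{(0,\infty)}$ commutes with $-\otimes J_m$" precise and clean. The subtlety is that $J_m$ is \emph{not} invertible as an object and the interaction with the change of base field ($\Bbbk((z))\rightsquigarrow\Bbbk((\z))$) has to be tracked: one must verify that the nilpotent operator defining $J_m$ on the $z$-side still acts as a nilpotent operator commuting with $\hat\nabla_{\z}$ on the $\z$-side, and that it is still a \emph{single} Jordan block of the same size $m$ (no splitting or merging occurs). I would handle this by noting that $N_m$ commutes with $\nabla_z$, hence with $\nabla_z^{-1}=-\z$, hence it is $\Bbbk((\z))$-linear on $V_{\z}$; and it commutes with $z$, hence with $\hat\nabla_{\z}=-\z^{-2}z$, so it is horizontal. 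Since $N_m$ is nilpotent of order exactly $m$ and $\dim_{\Bbbk((\z))}V_{\z}=\dim_{K}V$ forces the Jordan type to be preserved, the decomposition $\mathcal{F}^{(0,\infty)}(E_f)\otimes J_m$ follows. Everything else is a direct appeal to additivity and to Theorem~\ref{thm1}.
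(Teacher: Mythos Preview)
Your overall strategy---tracking $N_m$ through the transform as a horizontal $\Bbbk((\z))$-linear endomorphism---is sound and more explicit than the paper's argument. The paper instead observes that $E_f\otimes J_m$ is the \emph{unique} indecomposable object obtained as $m$ successive extensions of $E_f$; since $\mathcal{F}^{(0,\infty)}$ is an equivalence of categories, it sends this to the unique indecomposable $m$-fold self-extension of $E_g$, namely $E_g\otimes J_m$. That is the entire proof.

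There is, however, a genuine gap in your final step. The assertion $\dim_{\Bbbk((\z))}V_{\z}=\dim_{K}V$ is false: for irreducible $E_f$ with $\text{ord}(f)=-s/r$ one has $\dim_K E_f=r$ while $\dim_{\Bbbk((\z))}\mathcal{F}^{(0,\infty)}(E_f)=r+s$, so the transform changes dimension whenever $s>0$. More seriously, even granting everything you do establish---that $N_m$ is horizontal, $\Bbbk((\z))$-linear, nilpotent of exact order $m$, with $\ker N_m\simeq E_g$ irreducible---you cannot conclude that $V_{\z}$ is a single Jordan block. For example, on $(E_g\otimes J_2)^{\oplus 2}$ the horizontal endomorphism $\left(\begin{smallmatrix}0 & 1\\ N_2 & 0\end{smallmatrix}\right)$ is nilpotent of exact order $4$ and its kernel is a single copy of $E_g$. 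So ``no splitting or merging occurs'' really does need an argument beyond what you have. The cleanest fix is exactly the paper's move: invoke that $\mathcal{F}^{(0,\infty)}$ is an equivalence, hence preserves indecomposables; then $V_{\z}$ is indecomposable of length $m$ with every composition factor isomorphic to $E_g$, which forces $V_{\z}\simeq E_g\otimes J_m$.
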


\begin{proof}[Sketch of Proof]
  $E_f\otimes J_m$ is the unique indecomposable object in $\mathcal{C}$ formed by $m$ successive extensions of $E_f$.  Since we have an equivalence of categories, we only need to know how $\mathcal{F}^{(0, \infty)}$ acts on $E_f$.  This is given by Theorem \ref{thm1}.
\end{proof}

\subsection{Calculation of $\mathcal{F}^{(\infty,0)}$}\label{subsec calc2}

\begin{theorem}\label{thm2}
	 Let $f\in R^\comp_r(\zeta)$ with ord$(f)=-s/r$, $s<r$, and $f\neq 0$. Then $E_f\in \C^{<1}_{\infty}$ and
\[\mathcal{F}^{(\infty, 0)}(E_{f})\simeq E_{g},\]
where $g\in R^\comp_{r-s}(\hat{z})$ is determined by the following system of equations:
\begin{equation}\label{izsyseq1}f=z\hat{z}
\end{equation}
\begin{equation}\label{izsyseq2} g=-f+\frac{s}{2(r-s)}
\end{equation}

\end{theorem}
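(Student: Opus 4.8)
The plan is to mimic the strategy of Theorem \ref{thm1}, working directly with the operators defining $\mathcal{F}^{(\infty,0)}$ rather than with matrices. First I would unwind the definition analogous to \eqref{fourierdef}: for $E=(V,\nabla_\zeta)\in\mathcal{C}_\infty^{<1}$ with no horizontal sections, the transform $\mathcal{F}^{(\infty,0)}(E)$ is the space $V$ regarded as a $\Bbbk((\hat z))$-vector space, where $\hat z$ acts as a suitable expression in $\nabla_\zeta$ and $\zeta$ (following \cite[Propositions 3.9]{bloch} and the conventions of \cite[Section 2.2]{dima}), and the new connection $\hat\nabla_{\hat z}$ is the corresponding expression; the relation $f=z\hat z$ in \eqref{izsyseq1} records, at the level of leading terms, how the operator $\hat z$ is built from $\nabla_\zeta^{-1}$ and $\zeta$. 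Concretely, on $E_f=(K_r,\frac{d}{d\zeta}+\zeta^{-1}f)$ I would set up the explicit $\Bbbk$-linear operators for $\hat z$ and $\hat\nabla_{\hat z}$ on the $r$-dimensional $K$-vector space $K_r$, and then identify $V$ as a vector space over $\Bbbk((\hat z))$.

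The core computation is to determine the dimension and the canonical form. I would argue that $\hat z$, as an operator on $V_\zeta$, makes $V$ into a $\Bbbk((\hat z))$-vector space of dimension $r-s$: this is the analogue of the slope bookkeeping in Theorem \ref{thm1}, and it follows because $\operatorname{ord}(f)=-s/r$ with $s<r$ forces $\operatorname{ord}_\zeta(z)$ (computed from $z=f/\hat z$ together with the relation between $\hat z$ and $\nabla_\zeta$) to have the appropriate denominator. Then, picking a generator and reading off $\hat\nabla_{\hat z}$, one gets a connection of the form $\frac{d}{d\hat z}+\hat z^{-1}g$ for some $g$ with $\operatorname{ord}(g)=\operatorname{ord}(f)\cdot\frac{r}{r-s}$ after changing coordinates — the leading-order part being $g\equiv -f$, since inverting $\nabla_\zeta$ and passing to the new variable introduces the sign, matching the $-f$ in \eqref{izsyseq2}. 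That $g$ lands in $R^\comp_{r-s}$ (rather than some $K_{r'}$ with $r'<r-s$) follows from irreducibility of $E_f$ (Proposition \ref{prop}\eqref{prop2}) together with the fact that $\mathcal{F}^{(\infty,0)}$ is an equivalence on the subcategories with no horizontal sections, hence preserves irreducibility.

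The delicate point — and the step I expect to be the main obstacle — is pinning down the constant correction term $\frac{s}{2(r-s)}$ in \eqref{izsyseq2}. The leading-order matching $g\equiv -f$ is comparatively easy; but the $\frac{1}{\hat z}g$ normalization absorbs a logarithmic-derivative contribution coming from the change of variable $\zeta\rightsquigarrow\hat z$ and from rewriting $\hat\nabla_{\hat z}$ in terms of a generator of $V$ over $\Bbbk((\hat z))$. Precisely, when one conjugates the connection to bring it into the normal form $\frac{d}{d\hat z}+\hat z^{-1}g$, a term of the shape $\frac{1}{2}\cdot\frac{d\log(\cdot)}{d\hat z}$ appears, and tracking the exponent carefully (the Jacobian of $z\leftrightarrow\hat z$ contributes a factor with exponent proportional to $\frac{s}{r-s}$, whose $\hat z^{-1}$-coefficient is $\frac{1}{2}\cdot\frac{s}{r-s}$) yields exactly the stated constant. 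I would handle this the way Arinkin does in \cite[Section 7]{dima}: compute $\hat\nabla_{\hat z}$ acting on a carefully chosen element, isolate the scalar part, and verify it equals $\hat z^{-1}\big({-f}+\frac{s}{2(r-s)}\big)$ modulo the ambiguity in \eqref{conn equation}. A sanity check is that composing with $\mathcal{F}^{(0,\infty)}$ from Theorem \ref{thm1} (with $r\mapsto r-s$, $s\mapsto s$, so $r-s+s=r$) returns $f$ and the constants $\frac{s}{2(r+s)}$ and $\frac{s}{2(r-s)}$ cancel appropriately, consistent with $\mathcal{F}^{(\infty,0)}$ being inverse to $\mathcal{F}^{(0,\infty)}$.
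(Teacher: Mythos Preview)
Your outline has the same shape as the paper's proof: start from \cite[Proposition~3.9]{bloch}, namely $\zeta^2\nabla_\zeta=\hat z$ and $\zeta^{-1}=-\hat\nabla_{\hat z}$, work with the operators on $K_r$ directly, observe that the ``naive'' version $\zeta f=\hat z$ already gives \eqref{izsyseq1} and the leading part $g\equiv -f$, and then isolate the constant correction. So far so good.

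The gap is in how you actually produce the constant $\tfrac{s}{2(r-s)}$. Your paragraph about a Jacobian~/ logarithmic-derivative term of the shape $\tfrac12\,d\log(\cdot)/d\hat z$ is a heuristic in the spirit of stationary phase, not a proof in this framework; nothing in the operator set-up hands you a ``Jacobian'' to differentiate. The paper does \emph{not} obtain the constant that way. Instead it writes $\hat z=A+B$ with $A=\zeta f(\zeta)$ and $B=\zeta^{2}\dfrac{d}{d\zeta}$ and applies Lemma~\ref{abratlemma} to compute a fractional power $(A+B)^{r/(r-s)}$. The constant comes from the $\tfrac{m(m-1)}{2}A^{m-2}[B,A]$ term of that expansion with $m=\tfrac{r}{r-s}$ and $[B,A]=\zeta^{2}A'$; comparing the $\zeta^{1+s/r}$-coefficient with that of $\hat z^{(r+s)/(r-s)}$ yields the operator identity
\[
\zeta=h(\hat z)+a^{-2r/(r-s)}\Big[\tfrac{-\mathbb{Z}}{r-s}+\tfrac{-s}{2(r-s)}\Big]\hat z^{(r+s)/(r-s)}+\underline{o}\big(\hat z^{(r+s)/(r-s)}\big),
\]
and after inverting $\zeta$ and multiplying by $-\hat z$ one reads off $g=-f+\tfrac{-\mathbb{Z}}{r-s}+\tfrac{-s}{2(r-s)}+\underline{o}(1)$, which agrees with \eqref{izsyseq2} modulo $\tfrac{1}{r-s}\mathbb{Z}$. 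In other words, the ``carefully chosen element'' and the ``$\tfrac12\cdot\tfrac{s}{r-s}$'' you allude to are really the output of the operator-root lemma, and your proposal is missing that lemma as the engine of the computation.

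A minor point: in your sanity check, applying Theorem~\ref{thm1} to $E_g$ uses $r'=r-s$, $s'=s$, so the constant you should see is $\tfrac{s}{2r}$, not $\tfrac{s}{2(r+s)}$; the cancellation still works modulo $\tfrac{1}{r}\mathbb{Z}$, but as written your check does not close.
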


\begin{remark}
We determine $g$ from \eqref{izsyseq1} and \eqref{izsyseq2} as follows.  First, we use \eqref{izsyseq1} to express $\zeta$ in terms of $\hat{z}^{1/(r-s)}$.  We then substitute this expression into \eqref{izsyseq2} to get an expression for $g(\hat{z})$ in terms of $\hat{z}^{1/(r-s)}$.
\end{remark}

\subsection{Calculation of $\mathcal{F}^{(\infty,\infty)}$}\label{subsec calc3}

\begin{theorem}\label{thm3}
    Let $f\in  R^\comp_r(\zeta)$ with ord$(f)=-s/r$ and $s>r$. Then $E_f\in \C^{>1}_{\infty}$ and
\[\mathcal{F}^{(\infty, \infty)}(E_{f})\simeq E_{g},\]
where $g\in R^\comp_{s-r}(\z)$ is determined by the following system of equations:
\begin{equation}\label{iisyseq1}f=z\hat{z}
\end{equation}
\begin{equation}\label{iisyseq2} g=-f+\frac{s}{2(s-r)}
\end{equation}

\end{theorem}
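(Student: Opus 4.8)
The plan is to imitate the proof of Theorem \ref{thm1}, working with the operators of the transform directly in the spirit of \cite[Section 7]{dima}; the inequality $s>r$ is exactly what makes the output irregular and accounts for the differences with that argument.

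First I would write $\mathcal{F}^{(\infty,\infty)}$ out in coordinates. By the same dualization that produces the formula in the definition of $\mathcal{F}^{(0,\infty)}$, for $E=(V,\nabla_\zeta)\in\C^{>1}_\infty$ the operator $\hat z:=\zeta^{2}\nabla_\zeta\colon V\to V$ is invertible, $\z:=\hat z^{-1}$ extends to an action of $\Bbbk((\z))$ on $V$ with $\dim_{\Bbbk((\z))}V<\infty$, and $\mathcal{F}^{(\infty,\infty)}(E)=(V_{\z},\hat\nabla_{\z})$ with $\hat\nabla_{\z}=-\hat z^{2}\zeta^{-1}$. Specializing to $E_f=(K_r,\frac{d}{d\zeta}+\zeta^{-1}f)$ gives two $\Bbbk$-linear operators on $V=K_r$, namely $\hat z=\zeta^{2}\frac{d}{d\zeta}+\zeta f$ and $\zeta^{-1}$, which satisfy $[\zeta^{-1},\hat z]=1$, together with $\hat\nabla_{\z}=-\hat z^{2}\zeta^{-1}$.

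Next I would pin down the rank. Since $\mathrm{ord}_\zeta(f)=-s/r$ with $s>r$, the term $\zeta f$ has negative order and dominates $\zeta^{2}\frac{d}{d\zeta}$, so $\hat z$ is invertible with $\mathrm{ord}_\zeta(\hat z)=1-s/r=-(s-r)/r$; hence $\mathrm{ord}_\zeta(\z)=(s-r)/r$, a suitable $(s-r)$-th root of $\z$ is a uniformizer of $K_r$, and $K_r=\Bbbk((\z^{1/(s-r)}))=K_{s-r}(\z)$, so $\dim_{\Bbbk((\z))}V=s-r$. Because $\mathcal{F}^{(\infty,\infty)}$ is an auto-equivalence of $\C^{>1}_\infty$ and $E_f$ is irreducible (Proposition \ref{prop}\eqref{prop2}, as $f\in R^\comp_r$), the object $\mathcal{F}^{(\infty,\infty)}(E_f)$ is irreducible of rank $s-r$, hence isomorphic to $E_h$ for a unique $h\in R^\comp_{s-r}(\z)$; everything then reduces to showing $h$ equals the $g$ of \eqref{iisyseq1}--\eqref{iisyseq2}. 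I would then solve \eqref{iisyseq1}, read as $\zeta f(\zeta)=\z^{-1}$, for $\zeta$ (hence for $\zeta^{-1}$) as a Puiseux series in $\z^{1/(s-r)}$; the $s-r$ choices of root form a single $\mathrm{Gal}(K_{s-r}/K)$-orbit and give the same $E_g$ by Proposition \ref{prop}\eqref{prop1}, and after this substitution $f$ becomes an element of $\z^{-1}K_{s-r}$ with $\mathrm{ord}_{\z}(f)=-s/(s-r)$.

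The last step — bringing $\hat\nabla_{\z}=-\hat z^{2}\zeta^{-1}$ to canonical form $\frac{d}{d\z}+\z^{-1}g$ by conjugating with an element of $K_r^{\times}=K_{s-r}(\z)^{\times}$ — is where the real work is. At leading order $\hat z$ is multiplication by $\zeta f$, so $\hat\nabla_{\z}=-\hat z^{2}\zeta^{-1}$ agrees to leading order with multiplication by $-\zeta f^{2}=(\zeta f)\cdot(-f)\sim\z^{-1}(-f)$, which forces the leading part of $g$ to be $-f$; the constant $\frac{s}{2(s-r)}$ should then appear as the residue contribution of the gauge factor — morally the square root of the stationary‑phase Hessian, i.e. of $f$ — whose logarithmic derivative has residue $-\tfrac12\mathrm{ord}_{\z}(f)=\frac{s}{2(s-r)}$, so that modulo logarithmic derivatives of units one gets $h=g=-f+\frac{s}{2(s-r)}$, with $g\in R^\comp_{s-r}(\z)$ automatic from the rank. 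The main obstacle is making this conjugation precise: one must verify that after the gauge change $\hat\nabla_{\z}$ is \emph{exactly} $\frac{d}{d\z}+\z^{-1}g$ rather than merely so to top order — equivalently, exhibit a horizontal isomorphism $E_g\simeq(V_{\z},\hat\nabla_{\z})$ on the nose — which means controlling all the subleading terms coming from $-\hat z^{2}\zeta^{-1}$ and from the Puiseux inversion of \eqref{iisyseq1}, and in particular getting the sign of the exponential part and the exact rational residue to land on \eqref{iisyseq2}; by contrast the earlier steps are the same bookkeeping as in Theorem \ref{thm1} with the slope inequalities reversed.
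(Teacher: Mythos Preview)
Your setup is right and matches the paper: the operator relations $\hat z=\zeta^{2}\nabla_\zeta$ and $\zeta^{-1}=-\z^{2}\hat\nabla_{\z}$, the rank computation $\dim_{\Bbbk((\z))}V=s-r$, and the irreducibility argument are all exactly as in the paper (and as in the proof of Theorem~\ref{thm2}, of which Theorem~\ref{thm3} is a near-verbatim copy). The gap is in your last paragraph, which you yourself flag as ``the main obstacle.''

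The paper does \emph{not} extract $g$ by conjugating $\hat\nabla_{\z}=-\hat z^{2}\zeta^{-1}$ into canonical form via a gauge factor, and the constant $\frac{s}{2(s-r)}$ is not obtained as the residue of a Hessian square root. That heuristic is in the spirit of Sabbah's stationary-phase proof, not this one, and as you note it leaves the subleading terms uncontrolled. What the paper actually does (following Theorems~\ref{thm1} and \ref{thm2}) is compute the \emph{operator} $\zeta$, and then $z=\zeta^{-1}$, as an explicit series in the operator $\z$. Writing $\hat z=A+B$ with $A=\zeta f(\zeta)$ and $B=\zeta^{2}\frac{d}{d\zeta}$, Lemma~\ref{abratlemma} gives the fractional power
\[
\hat z^{\,r/(r-s)}=A^{r/(r-s)}+\tfrac{r}{r-s}A^{r/(r-s)-1}B+\tfrac{1}{2}\tfrac{r}{r-s}\bigl(\tfrac{r}{r-s}-1\bigr)A^{r/(r-s)-2}[B,A]+\lo(\cdots),
\]
and it is precisely the commutator term $\frac{m(m-1)}{2}A^{m-2}[B,A]$ that produces the coefficient $\frac{s}{2(s-r)}$ (together with a $\frac{\mathbb Z}{s-r}$ piece coming from $B$ acting on $K_r$). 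One then inverts to get $z=\zeta^{-1}$ in terms of $\z$, applies the relation $\zeta^{-1}=-\z^{2}\frac{d}{d\z}-\z g(\z)$ to $1\in K_r$ (so the $\frac{d}{d\z}$ term drops), reads off $g=-\z^{-1}(z\cdot 1)$, and finally uses Proposition~\ref{prop}\eqref{prop1} to discard the $\frac{\mathbb Z}{s-r}$ summand. That is the missing concrete mechanism; once you invoke Lemma~\ref{abratlemma} the rest is the same bookkeeping as in Theorem~\ref{thm2} with $s-r$ in place of $r-s$, and no separate gauge-transformation step is needed.
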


\begin{remark}
  We determine $g$ from \eqref{iisyseq1} and \eqref{iisyseq2} as follows.  First, we use \eqref{iisyseq1} to express $\zeta$ in terms of $\z^{1/(s-r)}$.  We then substitute this expression into \eqref{iisyseq2} to get an expression for $g(\z)$ in terms of $\z^{1/(s-r)}$.
\end{remark}

\section{Proof of Theorems}\label{sec proofs}

\subsection{Outline of Proof of Theorem \ref{thm1}}\label{F proof subsec} We start with the operators given in \eqref{fourierdef}, viewing them as equivalent operators on $K_r$.  We wish to understand how the operator $\hat{\nabla}_{\z}$ acts in terms of the operator $\z$.  The proof is broken into two cases, depending on the type of singularity. In the case of regular singularity, we have ord$(f)=0$, and the proof is fairly straightforward.  In the irregular singularity case where ord$(f)<0$, the proof hinges upon defining a fractional power of an operator, which is done in Lemma \ref{abratlemma}.  Lemma \ref{abratlemma} is the heavy lifting of the proof; the remaining portion is just calculation to extract the appropriate constant term (see remark below) from the expression given by Lemma \ref{abratlemma}.

\begin{remark} We give a brief explanation regarding the origin of the system of equations found in Theorem \ref{thm1}.  Consider the expressions given in \eqref{fourierdef}.  Suppose we were to make a \lq\lq naive" local Fourier transform over $K_r$ by defining $\nabla_z=z^{-1}f(z)$ and $\hat{\nabla}_{\z}=\z^{-1}g(\z)$; in other words, as in Definition \ref{connection} but without the differential parts.  Then from the equation $-(z^{-1}f)^{-1}=\z$ we conclude
\begin{equation}\label{syseq1nd}
f=-z\hat{z}.
\end{equation}Similarly, from $-\z^{-2}z=\z^{-1}g$ we find
$-\hat{z} z=g$, which when combined with \eqref{syseq1nd} gives
\begin{equation}\label{syseq2nd}
f=g.
\end{equation}When one incorporates the differential parts into the expressions for $\nabla_{z}$ and $\hat{\nabla}_{\z}$, one sees that the system of equations \eqref{syseq1nd} and \eqref{syseq2nd} nearly suffices to find the correct expression for $g(\z)$, only a constant term is missing.  This constant term arises from the interplay between the differential and linear parts of $\nabla_{z}$, and we wish to derive what the value of it is.  Similar calculations can be carried out to justify the systems of equations for Theorems \ref{thm2} and \ref{thm3}.
\end{remark}

\subsection{Lemmas}\label{subsec lemmas}

 \begin{definition} Let $A$ and $B$ be $\Bbbk$-linear operators from $K_q$ to $K_q$.
We define Ord($A$) to be
$$\d{\text{Ord}(A)=\inf_{f\in K_q}\big(\text{ord}(Af)-\text{ord}(f)\big), \text{ with Ord}(0):=\infty}$$ and define $\underline{o}(z^k)$ by $$A=B+\underline{o}(z^k) \text{  if and only if  } \text{Ord}(A-B)\geq k.$$
We say that $A$ is a \emph{similitude} if $\text{Ord}(A)=\text{ord}(Af)-\text{ord}(f)$ for \underline{any} $f\in K_q$.
\end{definition}

\begin{lemma}\label{abintlemma}
    Let $A$ and $B$ be $\Bbbk$-linear operators on $K_q$, with the following conditions:  $A$ and $A+B$ are similitudes, and $[A,[B,A]]=0$. Let Ord$(A)=a$, Ord$(B)=b$, and suppose that $a< b$.  Then
    \begin{equation}\label{abint}
        (A+B)^{m}=A^{m}+mA^{(m-1)}B+\frac{m(m-1)}{2}A^{m-2}[B,A] + \underline{o}(z^{a(m-1)+b})
    \end{equation}for all $m\in\mathbb{Z}$.
\end{lemma}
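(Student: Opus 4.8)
The plan is to reduce everything to one commutation identity plus a finite binomial-type expansion, handling $m\ge 0$ and $m<0$ separately. Write $C=[B,A]$, so the hypothesis $[A,[B,A]]=0$ says precisely that $A$, and hence every power $A^k$ with $k\in\bigz$, commutes with $C$. First I would set up the elementary Ord-calculus needed: $\mathrm{Ord}(XY)\ge\mathrm{Ord}(X)+\mathrm{Ord}(Y)$, $\mathrm{Ord}(X+Y)\ge\min(\mathrm{Ord}(X),\mathrm{Ord}(Y))$, and — crucially — that a similitude $X$ of finite Ord is invertible with $X^{-1}$ again a similitude of $\mathrm{Ord}(X^{-1})=-\mathrm{Ord}(X)$, and that left- or right-multiplication by a similitude $X$ shifts Ord by exactly $\mathrm{Ord}(X)$, i.e. $\mathrm{Ord}(XY)=\mathrm{Ord}(X)+\mathrm{Ord}(Y)=\mathrm{Ord}(YX)$. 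Invertibility here is the usual $z$-adic successive-approximation argument, using that a similitude induces $\Bbbk$-isomorphisms on each graded piece $\{f:\mathrm{ord}(f)\ge n\}/\{f:\mathrm{ord}(f)\ge n+1\}$. It then follows that $A^k$ is a similitude of $\mathrm{Ord}(A^k)=ka$ for all $k\in\bigz$, that $A+B$ is a similitude of $\mathrm{Ord}(A+B)=\min(a,b)=a$ since $a<b$, hence $(A+B)^n$ is a similitude of Ord $na$, and that $\mathrm{Ord}(C)\ge a+b$.

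The engine of the proof is the identity $BA^k=A^kB+kA^{k-1}C$ for all $k\in\bigz$, which I would prove by induction on $k$ in both directions, starting from $BA-AB=C$ and repeatedly invoking $AC=CA$. For $m\ge 1$ I would then expand $(A+B)^m$ as the sum of all $2^m$ words of length $m$ in $A$ and $B$, and separate the word $A^m$, the $m$ words containing exactly one $B$, and the rest. Moving the lone $B$ in each one-$B$ word all the way to the left via the commutation identity turns $\sum_{i=0}^{m-1}A^iBA^{m-1-i}$ into $mA^{m-1}B+\binom{m}{2}A^{m-2}C$, while every word containing at least two $B$'s has $\mathrm{Ord}\ge(m-2)a+2b\ge(m-1)a+b$ (this is the one place $b>a$ is used), so it lies in $\underline{o}(z^{a(m-1)+b})$; this gives the stated formula, and the cases $m=0,1$ are immediate by hand.

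For $m<0$, write $m=-n$ with $n\ge 1$ and let $S=A^{-n}-nA^{-n-1}B+\tfrac{n(n+1)}{2}A^{-n-2}C$ be the conjectured value of $(A+B)^{-n}$ (note $-n=m$, $\,-n=m$, and $\tfrac{n(n+1)}{2}=\tfrac{m(m-1)}{2}$). Using the already-proven $m=n$ case together with the commutation identity, I would multiply out $(A+B)^nS$ term by term: every term other than the leading $I$ turns out to have $\mathrm{Ord}\ge b-a>0$, so $(A+B)^nS=I+\underline{o}(z^{b-a})$. Left-multiplying by $(A+B)^{-n}$, a similitude of Ord $-na$ and hence an exact order shift, gives $S-(A+B)^{-n}=(A+B)^{-n}\bigl(I-(A+B)^nS\bigr)\in\underline{o}(z^{-na+(b-a)})=\underline{o}(z^{a(m-1)+b})$, which is the claim. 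I expect the main obstacle to be not any individual computation but making the Ord-bookkeeping for similitudes fully airtight — in particular invertibility and the exactness of the order shift — since the negative-$m$ case depends entirely on being able to push the error term through $(A+B)^{-n}$ with no loss; once that groundwork is laid, the remaining steps are routine.
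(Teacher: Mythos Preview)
Your argument is correct. One trivial slip: you say you move the lone $B$ ``all the way to the left,'' but the displayed result $mA^{m-1}B+\binom{m}{2}A^{m-2}C$ has $B$ on the right; the computation you actually perform (via $BA^k=A^kB+kA^{k-1}C$) moves $B$ to the right. This does not affect anything.

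Your route differs from the paper's in both halves. For $m\ge 0$ the paper does a step-by-step induction, multiplying the level-$m$ formula by $(A+B)$ on the right and simplifying; you instead expand $(A+B)^m$ into all $2^m$ words at once and sort by the number of $B$'s. These are equivalent in spirit, but your version makes the error estimate transparent: words with $\ge 2$ copies of $B$ visibly have $\mathrm{Ord}\ge (m-2)a+2b\ge(m-1)a+b$. For $m<0$ the paper uses the Neumann series $(A+B)^{-1}=A^{-1}-A^{-1}BA^{-1}+\cdots$ to establish the base case $m=-1$ and then inducts downward; you bypass both the series and the downward induction by writing the candidate $S$ for arbitrary $m=-n$, checking $(A+B)^nS=I+\underline{o}(z^{b-a})$ against the already-proven positive case, and then left-multiplying by the similitude $(A+B)^{-n}$. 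Your approach is cleaner once the Ord-calculus for similitudes is in place (in particular the exact order shift $\mathrm{Ord}(XY)=\mathrm{Ord}(X)+\mathrm{Ord}(Y)$ when $X$ is a similitude), and it makes explicit exactly where that hypothesis is used; the paper's approach is shorter to write but leaves more of the Ord-bookkeeping implicit.
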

\begin{proof} We first prove that \eqref{abint} holds for $m\geq 0$ using induction. The case $m=0$ is trivial.
Assuming the equation holds for $(A+B)^m$, we have
    \begin{equation*}
    \begin{split}
        (A+B)^{m+1} & = (A+B)^{m}(A+B)\\
         & = A^{m+1}+mA^{m-1}BA+\frac{m(m-1)}{2}A^{m-2}[B,A]A+A^mB + \underline{o}(z^{a(m-1)+b+a})\\
          & = A^{m+1}+(m+1)A^{m}B+mA^{m-1}[B,A]+\frac{m(m-1)}{2}A^{m-1}[B,A] + \underline{o}(z^{am+b})\\
                    & = A^{m+1}+(m+1)A^{m}B+\frac{m(m+1)}{2}A^{m-1}[B,A] + \underline{o}(z^{am+b})
    \end{split}
    \end{equation*}which completes the induction for the nonnegative integers .
 Since $A+B$ is invertible, the expansion
 $$(A+B)^{-1}=A^{-1}-A^{-1}BA^{-1}+A^{-1}BA^{-1}BA^{-1}-\dots$$
 is well-defined. Using that expansion (which verifies the base case $m=-1$), the proof for $m\leq -1$ follows in the same manner as the proof for the nonnegative integers above.  Note that the condition $\O(A^{-1})=-\O(A)$ (which follows from $A$ being a similitude) is necessary for the induction on the negative integers.
\end{proof}

We now wish to use \eqref{abint} to define \emph{fractional} powers of the operator $(A+B)$, given certain operators $A$ and $B$.  We follow the method of \cite[Section 7.1]{dima} to extend the definition, though our goal is more narrow; Arinkin defines powers for all $\alpha\in \Bbbk$, but we only need to define fractional powers $m\in\frac{1}{p}\mathbb{Z}$ for a given nonzero integer $p$.

\begin{lemma}\label{abratlemma}
    Let $A$ and $B$ be the following $\Bbbk$-linear operators on $K_q$: $A=\text{ multiplication by } f= jz^{p/q}+ \underline{o}(z^{p/q})$, $0\neq j \in \Bbbk$, and $B=z^n\frac{d}{dz}$ with $n\neq 0$, $p\neq 0,$ and $q> 0$ all integers.  We have Ord$(A)=\frac{p}{q}$ and  Ord$(B)=n-1$, and we assume that $\frac{p}{q}< n-1$. Then we can choose a $p^{\text{th}}$ root of $(A+B)$, $(A+B)^{1/p}$, such that
\begin{equation*}
        (A+B)^{m}=A^{m}+mA^{(m-1)}B+\frac{m(m-1)}{2}A^{m-2}[B,A] + \underline{o}(z^{(p/q)(m-1)+n-1})
    \end{equation*} holds for all $m\in \frac{1}{p}\mathbb{Z}$ where $(A+B)^{m}=((A+B)^{1/p})^{pm}$.
\end{lemma}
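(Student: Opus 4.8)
The plan is to deduce the integer case from Lemma~\ref{abintlemma}, to build $(A+B)^{1/p}$ by the order-by-order recursion of \cite[Section~7.1]{dima}, and then to obtain the expansion for all $m\in\tfrac1p\mathbb Z$ from a single $\text{Ord}$-estimate. First I would check the hypotheses of Lemma~\ref{abintlemma}. A one-line computation gives $[B,A]g=z^n\frac{d}{dz}(fg)-f z^n\frac{d}{dz}g=z^n f'\,g$, so $[B,A]$ is multiplication by $z^n f'$, which commutes with $A$; hence $[A,[B,A]]=0$. Multiplication by a nonzero element of $K_q$ is a similitude, so $A$ is a similitude with $\text{Ord}(A)=\text{ord}(f)=p/q$; and since $\text{ord}(Bg)\ge\text{ord}(g)+(n-1)>\text{ord}(g)+p/q$ for every $g$, the summand $fg$ strictly dominates $Bg$ inside $(A+B)g$, so $A+B$ is a similitude with $\text{Ord}(A+B)=p/q$, while $\text{Ord}(B)=n-1$. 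Lemma~\ref{abintlemma} therefore applies and gives the expansion for every $m\in\mathbb Z$.

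Next I would construct $(A+B)^{1/p}$ following \cite[Section~7.1]{dima}. Write $f=j z^{p/q}u$ with $u\in1+z^{1/q}\Bbbk[[z^{1/q}]]$, fix a $p$th root $j^{1/p}\in\Bbbk$, put $f^{1/p}:=j^{1/p}z^{1/q}u^{1/p}\in K_q$ (the binomial series for $u^{1/p}$ converging in $K_q$), and let $A^{1/p}$ be multiplication by $f^{1/p}$, so $(A^{1/p})^p=A$. I would then seek $(A+B)^{1/p}$ as a limit $A^{1/p}+E_1+E_2+\cdots$ of operators of strictly increasing $\text{Ord}$ solving $\bigl(A^{1/p}+E\bigr)^p=A+B$. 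Expanding gives $A+\sum_{i=0}^{p-1}(A^{1/p})^i E (A^{1/p})^{p-1-i}+(\text{monomials with at least two factors }E)$, and matching orders forces the leading part of $E$ to have $\text{Ord}$ equal to $\omega:=\tfrac1q+\bigl(n-1-\tfrac pq\bigr)$: its first-order part is $\tfrac1p A^{1/p-1}B$ (chosen so that $p(A^{1/p})^{p-1}\cdot\tfrac1p A^{1/p-1}B=B$), and its multiplication part is then forced by cancelling the residual commutator terms. The higher $E_\ell$ are obtained the same way, each from a linear equation $\sum_i(A^{1/p})^i E_\ell(A^{1/p})^{p-1-i}=(\text{known operator})$ whose solution exists and has the predicted $\text{Ord}$ because the map $X\mapsto\sum_i(A^{1/p})^i X (A^{1/p})^{p-1-i}$ is triangular with respect to differential order, with invertible diagonal (multiplication by $p f^{(p-1)/p}$). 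Since $\text{Ord}(E_\ell)\to\infty$, the series converges in the completion of \cite[Section~7.1]{dima} to an operator $(A+B)^{1/p}$ with $\bigl((A+B)^{1/p}\bigr)^p=A+B$ and $\text{Ord}\bigl((A+B)^{1/p}-A^{1/p}\bigr)=\omega$.

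It remains to prove the expansion for $m\in\tfrac1p\mathbb Z$. Since $A^{m-1}$ and $A^{m-2}$ are similitudes, $\text{Ord}(A^{m-1}B)=\tfrac pq(m-1)+n-1$ and $\text{Ord}\bigl(A^{m-2}[B,A]\bigr)=\tfrac pq(m-2)+\bigl(n-1+\tfrac pq\bigr)=\tfrac pq(m-1)+n-1$; thus both correction terms already lie in $\underline{o}\bigl(z^{(p/q)(m-1)+n-1}\bigr)$, so the claimed identity is equivalent to the single estimate $\text{Ord}\bigl((A+B)^m-A^m\bigr)\ge\tfrac pq(m-1)+n-1$ (here $A^m$ denotes multiplication by $f^m=(f^{1/p})^{pm}$). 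For $m\in\mathbb Z$ this is the first paragraph. For $m=k/p$, write $D=(A+B)^{1/p}=A^{1/p}+E$ with $\text{Ord}(E)=\omega>1/q$; expanding $D^k-(A^{1/p})^k$ as a sum of monomials in $A^{1/p}$ and $E$ each containing at least one factor $E$, the minimal $\text{Ord}$ among them is $\tfrac{k-1}q+\omega=\tfrac kq+n-1-\tfrac pq=\tfrac pq\bigl(\tfrac kp-1\bigr)+n-1$, and the same bound for $k<0$ follows from $D^{-1}-A^{-1/p}=-D^{-1}EA^{-1/p}$ together with an induction on $|k|$. (If one prefers to carry the explicit correction terms throughout, one can instead verify the identity for $m=\pm\tfrac1p$ and propagate it to all of $\tfrac1p\mathbb Z$ by multiplying expansions and collecting terms exactly as in the induction step of Lemma~\ref{abintlemma}, using $[A,[B,A]]=0$ to assemble the coefficient $\tfrac{m(m-1)}2$.) I expect the crux to be the middle paragraph---fixing the completion in which the series lives, checking invertibility of the linearized equation at each stage, and confirming $\bigl((A+B)^{1/p}\bigr)^p=A+B$ on the nose rather than merely modulo $\underline{o}$; this is the "heavy lifting". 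Everything after that is routine $\text{Ord}$ bookkeeping.
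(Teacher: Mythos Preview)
Your argument is correct and follows essentially the same route as the paper, though you unpack far more than the paper does. The paper's proof is only a few lines: it normalizes to $P=(1/j)(A+B)$, checks that $P$ acts on monomials $z^{\beta/q}$ via coefficients $p_i(\beta)$ that are affine in $\beta$ with $p_0\equiv 1$, and then invokes \cite[Section~7.1]{dima} as a black box to define $P^{1/p}$ (and hence $(A+B)^{1/p}:=j^{1/p}P^{1/p}$); the expansion itself is not argued separately there. You instead reconstruct the order-by-order recursion of \cite{dima} by hand and then supply the $\mathrm{Ord}$ estimate directly. Your observation that both displayed correction terms already have $\mathrm{Ord}$ exactly $\tfrac{p}{q}(m-1)+n-1$, so that the asserted identity is literally equivalent to the single bound $\mathrm{Ord}\bigl((A+B)^m-A^m\bigr)\ge\tfrac{p}{q}(m-1)+n-1$, is correct and worth recording: at the stated precision the explicit coefficients $m$ and $\tfrac{m(m-1)}{2}$ are formally redundant, and it is only in the application to Theorem~\ref{thm1} that their leading-order contributions are actually read off.
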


\begin{proof}
    We use the notation found in \cite[Section 7.1]{dima}.  Letting $P=(1/j)(A+B)$ we have $P\colon K_q\to K_q$ is $\Bbbk$-linear of the form
    $$P\left(\sum_{\beta}c_{\beta}z^{\beta/q} \right)=\sum_{\beta}c_{\beta}\sum_{i\geq 0}p_i(\beta)z^{(\beta+i+p)/q}.$$
    Thus $p_0(\beta)=1$ and all $p_i$ are constants or have the form $\beta/q+$constant, so the necessary conditions \cite[Section 7.1, conditions (1) and (2)]{dima} are satisfied.  We can now define $P^m$, and likewise $(A+B)^m=j^mP^m$, for $m=\frac{1}{p}$.
\end{proof}

\subsection{Proof of Theorem \ref{thm1}}\label{subsec proof1}

\begin{proof} From \cite[Proposition 3.7]{bloch} we have the following equations for the local Fourier transform $\mathcal{F}^{(0,\infty)}$:
\begin{equation}\label{benotationzi}
    z=-\z^2\partial_{\z} \text{ and } \partial_z=-\z^{-1}.
\end{equation}

Converting to our notation, we write $\partial_{\z}=\hat{\nabla}_{\z}=\frac{d}{d\z}+\z^{-1}g(\z)$ and $\partial_z=\nabla_{z}=\frac{d}{dz}+z^{-1}f(z)$. Then \eqref{benotationzi} becomes

\begin{equation}\label{mynotationzi}
    z=-\z^2\frac{d}{d\z}-\z g(\z)
\end{equation}
 and
\begin{equation}\label{firsteq}
    \frac{d}{dz}+z^{-1}f(z)=-\z^{-1}.
\end{equation}
Our goal is to use \eqref{firsteq} to write an expression for the operator $z$ in terms of $\z$, at which point we can substitute into \eqref{mynotationzi} to find an expression for $g(\z)$.

\noindent \textbf{Case One}: Regular singularity (ord$(f)=0$).

In this case we have $s=0$ and $r=1$, so $f=\alpha\in \Bbbk\setminus\mathbb{Z}$.  Then \eqref{firsteq} has form $\frac{d}{dz}+\frac{\alpha}{z}=-\z^{-1}$.  But on $K$, the operator $\frac{d}{dz}$ acts on monomials as multiplication by $\frac{n}{z}$ for some $n\in\mathbb{Z}$, and $f\in R^o_r(z)$ means that $\alpha$ is only defined up to a shift by $\mathbb{Z}$. Thus the operator $\frac{d}{dz}+\frac{\alpha}{z}$ acts in the same manner as just $\frac{\alpha}{z}$.  In other words, we can safely ignore the differential part of the operator in the case of a regular singularity.  The remainder of this case follows from the remark below the outline in subsection \ref{F proof subsec}.

\noindent \textbf{Case Two}: Irregular singularity (ord$(f)<0$).

Consider the equation
\begin{equation}\label{firsteq no diff}
z^{-1}f=-\z^{-1},
\end{equation}
 which is \eqref{firsteq} without the differential part, and coincides with \eqref{zisyseq1}.  Equation \eqref{firsteq no diff} can be thought of as an implicit expression for the variable $z$ in terms of $\z$, which one can rewrite as an explicit expression $z=h(\z)\in \Bbbk((\z^{1/(r+s)}))$ for the variable $z$. This is the purely algebraic calculation which in Theorem \ref{thm1} is stated as expressing $z$ in terms of $\z^{1/(r+s)}$.  Note that since there is no differential part in \eqref{firsteq no diff}, $h(\z)$ is \underline{not} the same as the operator $z$.  Since the leading term of $z^{-1}f(z)$ is $az^{-(r+s)/r}$ (for some $a\in\Bbbk$), \eqref{firsteq no diff} implies that $h(\z)=a^{r/(r+s)}(-\z)^{r/(r+s)}+\lo(\z^{r/(r+s)})$.  Using \eqref{firsteq} we find that the operator $z$ will be of the form
\begin{equation}\label{zdefzi}
    z=h(\z)+*(-\z)+\underline{o}(\z)
\end{equation}
where the $*\in\Bbbk$ represents the coefficient that arises from the interplay between the differential and linear parts of $-\z=\nabla_z^{-1}$.  As explained in the outline, we wish to find the value of *.  Let $A=z^{-1}f(z)$ and $B=\frac{d}{dz}$, then $[B,A]=A'=z^{-1}f'-z^{-2}f$. From \eqref{firsteq} we have $-\z=(A+B)^{-1}$, and we apply Lemma \ref{abratlemma} to find
\begin{equation*}
    (-\z)^{\frac{r}{r+s}}=a^{\frac{-r}{r+s}}\left(z+\dots+a^{-1}\left[\frac{-r}{r+s}\left(\frac{\mathbb{Z}}{r}\right)+\frac{-r}{r+s} + \frac{-s}{2(r+s)}\right]z^{\frac{r+s}{r}}+\underline{o}(z^{\frac{r+s}{r}})\right).
\end{equation*}
\begin{remark}  We use the notation $\frac{\mathbb{Z}}{r}$ to represent the operator $z\frac{d}{dz}$.  This notation makes sense, because $z\frac{d}{dz}\colon K_r\to K_r$ acts as $z\frac{d}{dz}(z^{n/r})=\frac{n}{r}(z^{n/r})$ for any $n\in\mathbb{Z}$.
\end{remark}
Also from Lemma \ref{abintlemma} we have
\begin{equation*}
    (-\z)=a^{-1}z^{1+(s/r)}+\underline{o}(z^{1+(s/r)}).
\end{equation*}
The appropriate value for * in \eqref{zdefzi} is the expression that will make the leading term of $*(-\z)$, which will be $*a^{-1}z^{1+(s/r)}$, cancel with $a^{-1}\left[\frac{-\mathbb{Z}}{r+s}+\frac{-r}{r+s} + \frac{-s}{2(r+s)}\right]z^{1+(s/r)}$. Thus we find that
\begin{equation}\label{starzi}
    *=\frac{\mathbb{Z}+r}{r+s}+\frac{s}{2(s+r)}.
\end{equation}

Applying both sides of \eqref{mynotationzi} to $1\in K_r$, and using the fact that $\frac{d}{d\z }(1)=0$, we see that $z=-\z g(\z)$.
 Thus to find the expression for $g$ we simply need to compute the Laurent series in $\z$ given by $(-\z^{-1})z$.  Substituting the expressions from \eqref{zdefzi} and \eqref{starzi} into $(-\z^{-1})z$, we have
\begin{equation*}
    g(\z)=-\z^{-1} h(\z) + \left(\frac{\mathbb{Z}+r}{r+s} +\frac{s}{2(r+s)}\right)+\underline{o}(1).
\end{equation*}  By Proposition \ref{prop}, (1), $E_{g,r+s}$ will be isomorphic to $E_{\dot{g},r+s}$ where
\begin{equation}\label{final g zi}
\dot{g}(\z)=-\z^{-1} h(\z) + \frac{s}{2(r+s)},
\end{equation}
since $g$ and $\dot{g}$ differ only by $\frac{\mathbb{Z}+r}{r+s}\in \frac{1}{r+s}\mathbb{Z}$. From \eqref{firsteq no diff} we have $-\z^{-1} h(\z)=-z\hat{z}=f$, so \eqref{final g zi} matches \eqref{zisyseq2} which completes the proof.
  \end{proof}

\subsection{Proof of Theorem \ref{thm2}}\label{subsec proof 2}

\begin{proof}
This proof is much the same as the proof of Theorem \ref{thm1}, so we only sketch the pertinent details.  From \cite[Proposition 3.9]{bloch}, in our notation we have
\begin{equation}\label{firsteq iz}
    \tz^2\nabla_{\tz}=\hat{z} \text{ and } \tz^{-1}=-\hat{\nabla}_{\hat{z}}
\end{equation}

We wish to write $z=\zeta^{-1}$ in terms of $\hat{z}^{1/(r-s)}$. Consider the equation
\begin{equation}\label{firsteq iz no diff}
\zeta f=\hat{z}
\end{equation}
which is the first equation of \eqref{firsteq iz} without the differential part.  We can think of \eqref{firsteq iz no diff} as an implicit definition for the variable $\zeta$, which we can rewrite as an explicit expression $\zeta=h(\hat{z})=a^{-r/(r-s)}\hat{z}^{r/(r-s)}+\lo(\hat{z}^{r/(r-s)})$.  Letting $A=\tz f(\tz)$, $B=\tz^2\frac{d}{d\tz}$ and $\hat{z}=A+B$, we have $[B,A]=\zeta^2A'$ and the Operator-root Lemma gives
$$\hat{z}^{r/(r-s)}=a^{r/(r-s)}\left(\tz+\dots +a^{-1}\left[\frac{r}{r-s}\left(\frac{\mathbb{Z}}{r}\right)+\frac{s}{2(r-s)}\right]\tz^{1+(s/r)}+ \underline{o}(\tz^{1+(s/r)})\right)$$
and
$$\hat{z}^{(r+s)/(r-s)}=a^{(r+s)/(r-s)}\tz^{1+(s/r)}+ \underline{o}(\tz^{1+(s/r)}).$$
We conclude that the operator $\zeta$ will be
$$\tz=h(\hat{z})+a^{-2r/(r-s)}\left[\frac{-\mathbb{Z}}{r-s}+ \frac{-s}{2(r-s)}\right]\hat{z}^{(r+s)/(r-s)}+ \underline{o}(\hat{z}^{(r+s)/(r-s)}).$$
Inverting the operator $\zeta$, we find
$$\zeta^{-1}=z=h(\hat{z})^{-1}+\left(\frac{\mathbb{Z}}{r-s}+ \frac{s}{2(r-s)}\right)\hat{z}^{-1}+ \underline{o}(\hat{z}^{-1})$$
and it follows that

$$g(\hat{z})=-\hat{z} z=-\hat{z}h(\hat{z})^{-1}+\frac{-\mathbb{Z}}{r-s}+\frac{-s}{2(r-s)}+\underline{o}(1).$$

Note that $f=\hat{z}h(\hat{z})^{-1}$.  As in the proof of Theorem \ref{thm1}, we use Proposition \ref{prop}, \eqref{prop1}, to find an object isomorphic to $E_g$ which matches the object given in the theorem, completing the proof of Theorem \ref{thm2}.
\end{proof}

\subsection{Proof of Theorem \ref{thm3}}\label{subsec proof 3}

\begin{proof}
The calculations are virtually identical to the proof of Theorem \ref{thm2}, but the expressions are written in terms of $\z$ instead of $\hat{z}$, and $s-r$ instead of $r-s$.  Starting with \cite[Proposition 3.12]{bloch}, in our notation we have

\begin{equation*}
    \tz^2\nabla_{\tz}=\hat{z} \text{ and } \tz^{-1}=-\z^2\hat{\nabla}_{\z}.
\end{equation*}
Repeating the calculations of Theorem \ref{thm2} we conclude that
$$g(\z)=-\z^{-1}z = -\z^{-1}h(\z)^{-1}+\frac{\mathbb{Z}}{s-r} + \frac{s}{2(s-r)} + \underline{o}(1).$$
Note that $-\z^{-1}h(\z)^{-1}=f$.  As before, by considering an appropriate isomorphic object we eliminate the term with $\mathbb{Z}$, completing the proof of Theorem \ref{thm3}.
\end{proof}

\section{Comparison with previous results}\label{fang compare}

One notes that in \cite{fang}, Fang's Theorems 1, 2, and 3 look slightly different from those given in (respectively) our Theorems \ref{thm1}, \ref{thm2}, and \ref{thm3}.  We shall present a brief explanation for the equivalence of Fang's Theorem 1 and our Theorem \ref{thm1}.  One large difference in our methods is that Fang's calculations are split into a regular and irregular part, whereas we calculate both parts simultaneously.  We first verify the equivalence for the irregular part.

\subsection{Equivalence for the irregular part}\label{subsec equiv1}

Suppose $f$ in Theorem \ref{thm1} has zero regular part. In particular, this means that $f$ has no constant term.  Then with Fang's notation on the left and our notation on the right, we have the following relationships:
\[ t \text{  corresponds to  } z\]
\[ t' \text{  corresponds to  } \hat{z}\]
\[ t\partial_t(\alpha) \text{  corresponds to  } f\]
\[ (1/t')\partial_{(1/t')}(\beta)+\frac{s}{2(r+s)} \text{  corresponds to  } g\]

Using the correspondences above and equation (2.1) from Fang's paper, one can manipulate the systems of equations to see that the theorems coincide on the irregular part.

\subsection{Equivalence for the regular part}\label{subsec equiv2}

In \cite{fang}, the structure of the theorems is such that the calculation of the regular part is quite straightforward.  Using our theorems, however, the calculation of the regular part is hidden. To verify that the regular portion of our calculation matches up with the results from \cite{fang}, it suffices to prove the claim below.  We note that one can also calculate the regular part by using the global Fourier transform and meromorphic Katz extension; our proof is independent of that method.

\begin{claim}\label{claim}
	Let $f(z)=az^{-s/r}+\dots+b$ as in Theorem \ref{thm1} and $\mathcal{F}^{(0, \infty)}(E_f) = E_g$. Then $g$ will have constant term $\d{\left(\frac{r}{r+s}\right)b+\frac{s}{2(r+s)}}$.
\end{claim}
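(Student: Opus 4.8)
The plan is to extract the constant term of $g$ directly from the analysis already carried out in the proof of Theorem \ref{thm1}, making the dependence on the constant term $b$ of $f$ explicit. Recall from that proof that $g(\z) = -\z^{-1}h(\z) + \frac{s}{2(r+s)} + \lo(1)$, where $z = h(\z)$ is the solution of the purely algebraic equation $z^{-1}f(z) = -\z^{-1}$, i.e.\ $f(z) = -z\hat z$. So the entire content of the claim is to show that the function $-\z^{-1}h(\z) = f(h(\z))$, which is $f$ re-expressed as a Laurent series in $\z^{1/(r+s)}$, has constant term $\left(\frac{r}{r+s}\right)b$ — the shift $\frac{s}{2(r+s)}$ is already isolated and contributes nothing new.

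First I would set up the substitution concretely: write $f(z) = az^{-s/r} + (\text{intermediate terms}) + b + (\text{higher order})$, and recall that the leading behavior of $h$ is $h(\z) = a^{r/(r+s)}(-\z)^{r/(r+s)} + \lo(\z^{r/(r+s)})$, so that $z = h(\z)$ has order $r/(r+s)$ in $\z$. The key observation is that $-\z^{-1}h(\z) = -\hat z \cdot z$ where $z = h(\z)$, and from $f = -z\hat z$ this equals $f$ itself — so I must compute the constant term (the $\z^0$ coefficient) of $f$ when $f$ is viewed as a series in $\z^{1/(r+s)}$ via the substitution $z = h(\z)$. Since $\operatorname{ord}_z(f) = -s/r$ and $\operatorname{ord}_\z(z) = r/(r+s)$, a monomial $z^{-k/r}$ in $f$ becomes a monomial of order $-\frac{k}{r}\cdot\frac{r}{r+s} = -\frac{k}{r+s}$ in $\z$; the $\z^0$ term arises exactly from $k = 0$, i.e.\ from the constant term $b$ of $f$, \emph{plus} whatever the non-constant terms of $f$ contribute through the higher-order corrections in the expansion of $h$.

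The cleanest route to the coefficient $\frac{r}{r+s}$ is a scaling/homogeneity argument rather than a brute-force series manipulation. I would argue as follows: the constant term of $g$ should be computed by a residue-type formula. From $f(z) = -z\hat z$ we get, differentiating logarithmically, $\frac{f'(z)}{f(z)}\,dz = \frac{dz}{z} - \frac{d\hat z}{\hat z}$ — wait, more useful is to track how $\frac{dz}{z}$ transforms. Concretely, the constant term of $g$ as a function of $\z$ equals the residue $\operatorname{Res}_{\z=0}\big(g(\z)\,\frac{d\z}{\z}\big)$ up to the $(r+s)$ vs.\ the appropriate normalization — but since $g$ and $f$ live on the degree $r+s$ (resp.\ degree $r$) cover, I would instead use that the constant term of a series $\phi$ in $w^{1/N}$ is $\frac{1}{N}\operatorname{Res}_{w=0}(\phi\,\frac{dw}{w})$ computed downstairs, and that residues are invariant under the coordinate change $z \leftrightarrow \z$ determined by $f = -z\hat z$. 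Tracking $\frac{dz}{z}$ versus $\frac{d\z}{\z}$ through $f = -z\hat z = -z/\z$ gives $\frac{d\z}{\z} = \frac{dz}{z} - \frac{df}{f}$, so $\operatorname{Res}\big(f\,\frac{d\z}{\z}\big) = \operatorname{Res}\big(f\,\frac{dz}{z}\big) - \operatorname{Res}(df) = \operatorname{Res}\big(f\,\frac{dz}{z}\big)$, since $\operatorname{Res}(df) = 0$. Now $\operatorname{Res}_z\big(f\,\frac{dz}{z}\big)$ picks out the $z^0$ coefficient of $f$, namely $b$; but the two residues are normalized by the ramification indices $r+s$ (downstairs in $\z$) and $r$ (downstairs in $z$), and the ratio of these normalizations is exactly $\frac{r}{r+s}$. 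Hence the constant term of $-\z^{-1}h(\z) = f$, read as a series in $\z^{1/(r+s)}$, is $\frac{r}{r+s}\,b$, and adding back $\frac{s}{2(r+s)}$ from \eqref{final g zi} yields the claim.

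\textbf{Main obstacle.} The delicate point is getting the normalization factor $\frac{r}{r+s}$ right — i.e.\ correctly matching the residue computed on the $z$-side (a degree $r$ cover of the $z$-disk, since $f \in R^\comp_r$) against the residue on the $\z$-side (a degree $r+s$ cover). One must be careful that "constant term" means the coefficient of $\z^0$ in the expansion in $\z^{1/(r+s)}$, and relate this to an honest residue via the correct power of the uniformizer; the factor $\frac{r}{r+s}$ is precisely the Jacobian of $z^{1/r} \mapsto \z^{1/(r+s)}$ at the level of the $\frac{d(\text{unif})}{\text{unif}}$ forms, and conflating the two ramification structures is the easy way to drop or invert this factor. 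A safe alternative, if the residue bookkeeping proves fiddly, is to verify the claim by the explicit substitution in the model case $f(z) = az^{-s/r} + b$ (no intermediate terms), solve $z^{-1}f = -\z^{-1}$ perturbatively to one order past the leading term, and observe that the $b$-contribution to the $\z^0$ coefficient of $f(h(\z))$ comes with coefficient $\frac{r}{r+s}$; the general case then follows since the intermediate terms of $f$ only affect higher-order (negative-power) terms of $g$, not its constant term.
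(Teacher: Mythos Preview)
Your residue argument is correct and amounts to a different packaging of the same identity the paper uses. The paper sets $j(z) = -z^{-1}f$, so that $\hat z = j(z)$, forms the compositional inverse $j^{\langle -1\rangle}$, and applies the Lagrange inversion formula (Lemma~\ref{fgcoeffslemma}) to obtain $[\hat z^{-1}]j^{\langle -1\rangle} = \frac{r}{r+s}[z^{-1}]j = -\frac{rb}{r+s}$; since $-\hat z^{-1}\big(g - \frac{s}{2(r+s)}\big) = j^{\langle -1\rangle}(\hat z)$, the constant term of $g$ falls out. You instead observe that $-\z^{-1}h(\z)$ is literally $f$ re-expressed in the variable $\z$, and compare $\operatorname{Res}\big(f\,\frac{dz}{z}\big)$ with $\operatorname{Res}\big(f\,\frac{d\z}{\z}\big)$ via $\frac{d\z}{\z} = \frac{dz}{z} - \frac{df}{f}$; accounting for the ramification indices $r$ and $r+s$ yields $(r+s)[\z^0]f = r[z^0]f = rb$ directly. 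The two arguments are equivalent at heart --- residue invariance under change of uniformizer is exactly the engine behind the Lagrange inversion step in Lemma~\ref{fgcoeffslemma} --- but yours is shorter and bypasses both the auxiliary Lemma~\ref{Laurinv} and the explicit compositional inverse.

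One caveat: your ``safe alternative'' is not actually safe as stated. The assertion that the intermediate terms of $f$ only produce negative-power contributions to $g$ is false on its face: an intermediate term $c_k z^{-k/r}$ perturbs the series $h(\z)$, and feeding the perturbed $h$ back through the leading term $az^{-s/r}$ generically produces a $\z^0$ contribution. What is true is that all such contributions cancel, but that cancellation \emph{is} the residue/Lagrange identity you have already established --- so the model-case computation does not give an independent verification of the general claim.
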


Before we prove Claim \ref{claim}, we first prove two lemmas regarding general facts about formal Laurent series and compositional inverses.

\begin{lemma}\label{Laurinv}
  Let $j(z)\in K_q$ with ord$(j)=\frac{p}{q}$, $p\in \bigz -\{0\}$ and $q>0$.  If $p>0$, then $j$ has a  formal compositional inverse $j^{\langle -1 \rangle}\in\Bbbk((z^{1/p}))$.   If $p<0$, then $j$ has a formal compositional inverse $j^{\langle -1 \rangle}\in\Bbbk((\zeta^{1/p}))$.
\end{lemma}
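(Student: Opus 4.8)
The plan is to prove Lemma \ref{Laurinv} by an explicit Newton-type iteration that constructs the compositional inverse term by term, after first normalizing the leading behavior of $j$.

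\textbf{Setup and reduction to a unipotent-type problem.} Suppose first $p>0$, so $\mathrm{ord}(j)=p/q>0$ and we may write $j(z)=c z^{p/q}(1+u(z))$ where $0\neq c\in\Bbbk$ and $u(z)\in z^{1/q}\Bbbk[[z^{1/q}]]$, i.e. $u$ has positive order. I would like a series $w$ in the variable $z^{1/p}$ with $j(w(z))=z$. Substituting the shape $w(z)=d\, z^{q/p}\big(1+v(z)\big)$ with $d^{p/q}c=1$ (a choice of root, available since $\Bbbk$ is algebraically closed — this is exactly the ``choice of root of unity'' ambiguity, harmless by Proposition \ref{prop}\eqref{prop1}) reduces the equation $j(w)=z$ to an equation for $v\in z^{1/p}\Bbbk[[z^{1/p}]]$ of the form $v = \Phi(v)$, where $\Phi$ raises order: each iterate $v_{n+1}=\Phi(v_n)$ starting from $v_0=0$ agrees with the true solution to higher and higher order in $z^{1/p}$. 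Concretely, expanding $j(w(z)) = z\cdot(1+u(w(z)))\cdot(1+v(z))^{p/q}$ and solving for $v$ shows $v$ is determined modulo $z^{(k+1)/p}$ by its values modulo $z^{k/p}$, because $u(w(z))$ only involves $w$, hence $v$, to strictly lower order, and $(1+v)^{p/q}-1 - \tfrac{p}{q}v$ is quadratic in $v$. Thus the iteration converges $z^{1/p}$-adically and produces $j^{\langle -1\rangle}\in\Bbbk((z^{1/p}))$; in fact $j^{\langle -1\rangle}\in\Bbbk[[z^{1/p}]]$ with order $q/p$. One then checks $j\circ j^{\langle -1\rangle}=\mathrm{id}$ and $j^{\langle -1\rangle}\circ j=\mathrm{id}$ by the usual formal argument (a left inverse and a right inverse of a series of positive order agree).

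\textbf{The case $p<0$.} Here $\mathrm{ord}(j)<0$, so $j$ has a pole; write $j(z)=c z^{p/q}(1+u(z))$ again with $u$ of positive order in $z^{1/q}$, and now set $\zeta = 1/z$, so $j(z) = c\,\zeta^{-p/q}(1+\tilde u(\zeta))$ with $-p/q>0$ and $\tilde u$ of positive order in $\zeta^{1/q}$. This is formally identical to the $p>0$ situation with $\zeta$ in place of $z$: the previous paragraph gives an inverse as a series in $(\text{value of }j)^{-1/|p|}$ expressed through $\zeta$, i.e. $j^{\langle -1\rangle}\in\Bbbk((\zeta^{1/p}))$ (note $\zeta^{1/p}=z^{-1/p}$, consistent with the statement as written). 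So the $p<0$ case follows from the $p>0$ case by the coordinate change $z\leftrightarrow\zeta$, with no new content.

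\textbf{Main obstacle.} The only real subtlety is bookkeeping with the fractional exponents: one must be careful that the iteration genuinely stays inside $\Bbbk((z^{1/p}))$ (not some finer ramification) and that the order estimates showing $v_{n+1}\equiv v_n$ to increasing order are uniform enough to guarantee a limit. Making the substitution $w=d z^{q/p}(1+v)$ precise — in particular tracking that composing $u$ (a series in $z^{1/q}$) with $w$ (a series in $z^{1/p}$) lands in $z^{1/p}\Bbbk[[z^{1/p}]]$ — is where I would spend the most care; everything else is the standard formal implicit function theorem. I expect the writeup to invoke the $z$-adic (resp. $\zeta$-adic) completeness of $\Bbbk[[z^{1/p}]]$ and contractivity of $\Phi$ to finish cleanly.
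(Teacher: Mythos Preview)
Your argument is correct in spirit but takes a different route from the paper. The paper's proof is a one-line reduction: setting $h(z) = (z^{1/p} \circ j \circ z^q)(z)$ produces an ordinary formal power series $h \in z\Bbbk[[z]]$ with nonzero linear term, invokes the classical formal inverse function theorem to obtain $h^{\langle -1 \rangle}$, and then defines $j^{\langle -1 \rangle} = z^q \circ h^{\langle -1 \rangle} \circ z^{1/p}$. Your approach instead builds the inverse by an explicit Newton-type iteration, tracking orders directly in the fractional-exponent variable. Both work; the paper's conjugation trick buys brevity by outsourcing the iteration to a standard lemma, while your approach is self-contained and makes the ramification bookkeeping (that the inverse really lands in $\Bbbk((z^{1/p}))$ and no finer) explicit.

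One slip to flag in your $p<0$ reduction: substituting $\zeta = 1/z$ in the \emph{source} does not give $\tilde u$ of positive order in $\zeta^{1/q}$. If $u \in z^{1/q}\Bbbk[[z^{1/q}]]$, then $u(1/\zeta)$ has \emph{negative} order in $\zeta$, so $j$ viewed as a function of $\zeta$ is not of the shape your $p>0$ argument requires. The correct reduction is to change variable in the \emph{target}: since $\mathrm{ord}(1/j) = -p/q > 0$, the $p>0$ case applies to $1/j$ and yields $(1/j)^{\langle -1 \rangle} \in \Bbbk((y^{1/|p|}))$; then $j^{\langle -1 \rangle}(y) = (1/j)^{\langle -1 \rangle}(1/y)$ lies in $\Bbbk((\zeta^{1/|p|}))$ with $\zeta = 1/y$ the coordinate at infinity. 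This is an easy fix and does not affect the substance of your argument.
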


\begin{proof}
	Let  $h(z) = (z^{1/p}\comp j\comp z^q)(z)$.  Then $h(z)$ is a formal power series with no constant term and a nonzero coefficient for the $z$ term.  Such a power series will have a compositional inverse, call it $h^{\langle -1 \rangle}(z)$.  Then $j^{\langle -1 \rangle}(z):=(z^q\comp h^{\langle -1 \rangle}\comp z^{1/p})(z)$ will be a compositional inverse for $j$.
\end{proof}

\begin{remark}  Note that $h$ (and $h^{\langle -1 \rangle}$ as well) is not unique  since a choice of root of unity is made.  This will not affect our result, though, since $h^p$ and $(h^{\langle -1 \rangle})^q$ will be unique.
\end{remark}

\begin{lemma}\label{fgcoeffslemma}
	Let $\d{j(z)=az^{-(r+s)/r}+\dots+bz^{-1}+\underline{o}(z^{-1})}$, $j(z)\in K_r$, with $s$ a nonnegative integer and $r\in\mathbb{Z}^+$.  Then the coefficient for the $z^{-1}$ term of $j^{\langle -1 \rangle}(z)$ will be $\frac{br}{r+s}$.
\end{lemma}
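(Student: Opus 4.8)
\textbf{Proof plan for Lemma \ref{fgcoeffslemma}.}

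The plan is to reduce the statement to a clean computation about compositional inverses of power series, using the same ``monomial substitution'' trick that appears in the proof of Lemma \ref{Laurinv}. First I would write $j(z) = a z^{-(r+s)/r} u(z)$, where $u(z) = 1 + \dots + (b/a) z^{s/r} + \underline{o}(z^{s/r}) \in \Bbbk[[z^{1/r}]]$ has constant term $1$; the point is that the $z^{-1}$ coefficient of $j$ corresponds to the $z^{s/r}$ coefficient of $u$, namely $b/a$. Then I would pass to the variable $w = z^{1/r}$ and set $\tilde j(w) = j(w^r)$, so that $\tilde j$ has order $-(r+s)$ in $w$, and apply Lemma \ref{Laurinv} to produce $\tilde j^{\langle -1\rangle}$, which lives in $\Bbbk((\zeta^{1/(r+s)}))$ (here $\zeta = 1/z$), i.e.\ has order $-1/(r+s)$ in $w$. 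The relation between $j^{\langle -1\rangle}$ and $\tilde j^{\langle -1\rangle}$ is again just the monomial conjugation $j^{\langle -1\rangle} = w^r \comp \tilde j^{\langle -1\rangle}$, so tracking one coefficient of $j^{\langle -1\rangle}$ amounts to tracking one coefficient of $\tilde j^{\langle -1\rangle}$.

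The heart of the computation is then the following elementary fact, which I would isolate as the main step: if $\phi(w) = a w^{-N}(1 + c w^{k} + \underline{o}(w^{k}))$ with $N = r+s$, $k = s$, and $c = b/a$, then its compositional inverse $\psi = \phi^{\langle -1\rangle}$, which has the form $\psi(w) = a^{1/N} w^{-1/N}(1 + d\, w^{k/N} + \underline{o}(w^{k/N}))$ for some $d \in \Bbbk$, has $d$ determined by the identity $\phi(\psi(w)) = w$. Expanding $\phi(\psi(w))$ to the relevant order and setting the coefficient of the appropriate power of $w$ to zero gives a linear equation for $d$; I expect it to come out to $d = -c/N = -b/(aN)$ up to the bookkeeping with the exponent $-1/N$ (the chain-rule-type factor $-N$ from differentiating $w^{-N}$ is what produces the $r/(r+s)$ rather than a bare $1/(r+s)$). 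Translating $d$ back: the $w^{-1+k}$ coefficient — equivalently the $z^{-1}$ coefficient after undoing $w = z^{1/r}$ — of $j^{\langle -1\rangle}$ works out to $\dfrac{br}{r+s}$, with the factor $r$ coming precisely from the conjugation by $z \mapsto z^{1/r}$ together with the $-N$ from the leading term.

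The main obstacle is purely organizational rather than conceptual: keeping the fractional exponents straight through the two layers of conjugation (first $z = w^r$ to clear denominators, then the leading-monomial extraction $w \mapsto w^{-N}\cdot(\text{unit})$ inside Lemma \ref{Laurinv}), and making sure the claimed leading coefficient $a^{1/N}$ (which is only defined up to a root of unity, as the remark after Lemma \ref{Laurinv} notes) does not actually appear in the final answer — indeed $a$ drops out of the ratio $d \cdot a^{1/N} / a^{1/N}$-type expression, leaving only $br/(r+s)$, so the root-of-unity ambiguity is harmless here. Once the substitutions are set up, the only real content is expanding $\phi(\psi(w)) = w$ to one order past the leading term and solving a single linear equation, so I would present that expansion explicitly and leave the surrounding exponent bookkeeping to the reader.
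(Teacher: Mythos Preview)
Your plan has a genuine gap at the ``heart of the computation.'' In passing from $u(z) = 1 + \dots + (b/a)z^{s/r} + \underline{o}(z^{s/r})$ to $\phi(w) = aw^{-N}(1 + cw^k + \underline{o}(w^k))$ you have silently dropped the intermediate terms hidden in the ``$\dots$'' of the lemma's hypothesis. Those terms are really there: $j$ may have nonzero coefficients at every order strictly between $-(r+s)/r$ and $-1$. With them present, expanding $\phi(\psi(w)) = w$ to the order of the $d$-term does \emph{not} yield a single linear equation in $d$ and $c$ alone; the equation at that order involves every lower-order coefficient of $\phi$ together with every lower-order coefficient of $\psi$ (each of which in turn depends on the earlier ones). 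The content of the lemma is precisely that all of these contributions cancel, leaving only $br/(r+s)$, and your outline does not address this cancellation.

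The paper handles this by invoking the Lagrange inversion formula, which packages exactly the needed cancellation: after the substitution $h(z) = z^{-1/(r+s)} \comp j \comp z^r$ (the same conjugation you describe), Lagrange inversion gives $(r+s)[z^{r+s}](h^{\langle -1\rangle})^r = r[z^{-r}]h^{-(r+s)}$, and both sides are then identified with the two coefficients in question. If you prefer to avoid quoting Lagrange inversion, an equivalent route is a residue argument: writing $w = j(z)$ and working in the uniformizers $t = z^{1/r}$, $\tau = w^{-1/(r+s)}$, one has $\text{Res}_\tau(z\,dw) = \text{Res}_t(z\,dw) = -\text{Res}_t(w\,dz)$ since $z\,dw + w\,dz = d(zw)$ is exact; computing both residues in the respective uniformizers gives $-(r+s)[w^{-1}]j^{\langle -1\rangle} = -r[z^{-1}]j$, which is the claim. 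Either way, the single-step expansion you propose is not enough.
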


\begin{proof}
	  Let $h(z) = (z^{-1/(r+s)}\comp j\comp z^r)(z)$.  Then $j(z^r)=h^{-(r+s)}$ and from the proof of Lemma \ref{Laurinv} we have
\begin{equation}\label{inverseeqs}
	 j^{\langle -1 \rangle}(z^{-(r+s)})=(h^{\langle -1 \rangle})^r.
\end{equation}
According to the Lagrange inversion formula, the coefficients of $h$ and $h^{\langle -1 \rangle}$ are related by
\begin{equation}\label{lif}
	(r+s)[z^{r+s}](h^{\langle -1 \rangle})^r=r[z^{-r}]h^{-(r+s)}
\end{equation}
where $[z^{r+s}](h^{\langle -1 \rangle})^r$ denotes the coefficient of the $z^{r+s}$ term in the expansion of $(h^{\langle -1 \rangle})^r$.  Substituting \eqref{inverseeqs} and $j(z^r)=h^{-(r+s)}$ into \eqref{lif} we conclude that
\begin{equation}\label{lifconc}
	[z^{r+s}]j^{\langle -1 \rangle}(z^{-(r+s)})=\frac{r}{r+s}[z^{-r}]j(z^r)
\end{equation}Since $[z^{-r}]j(z^r)=b$, the conclusion follows.
\end{proof}

\begin{proof}[Proof of Claim \ref{claim}]
    Given the notation used above for the Lagrange inversion formula, we can restate the claim as follows: if $[z^0]f=b$, then $[\z^0]g=\frac{br}{r+s}+\frac{s}{2(r+s)}$.\\
      Let $j(z)=-z^{-1}f$.  Then
    \begin{equation*}
    [z^{-1}]j=-[z^0]f=-b.
    \end{equation*}
      By \eqref{zisyseq1} we conclude that $\hat{z}=j(z)$, and by Lemma \ref{Laurinv} let $j^{\langle -1 \rangle}$ be the compositional inverse.  Then $j^{\langle -1 \rangle}(\hat{z})=z$.  From \eqref{zisyseq2} we have $g=-z\hat{z}+\frac{s}{2(r+s)}$, which implies that $-\hat{z}^{-1}(g-\frac{s}{2(r+s)})=j^{\langle -1 \rangle}(\hat{z})$.  This gives
\[ [\hat{z}^{-1}]j^{\langle -1 \rangle}=-[\hat{z}^0]g+\frac{s}{2(r+s)}
\]
or equivalently
      \begin{equation}\label{last}
    [\hat{z}^0]g=-[\hat{z}^{-1}]j^{\langle -1 \rangle}+\frac{s}{2(r+s)}.
    \end{equation}
    By Lemma \ref{fgcoeffslemma}, $[z^{-1}]j=-b$ implies that $[\hat{z}^{-1}]j^{\langle -1 \rangle}=\frac{-br}{r+s}$.  The result then follows from \eqref{last} and noting that $[\hat{z}^0]g=[\z^0]g$.
\end{proof}

\bibliographystyle{amsalpha}
\bibliography{dissreferences}

\providecommand{\bysame}{\leavevmode\hbox to3em{\hrulefill}\thinspace}
\providecommand{\MR}{\relax\ifhmode\unskip\space\fi MR }
% \MRhref is called by the amsart/book/proc definition of \MR.
\providecommand{\MRhref}[2]{%
  \href{http://www.ams.org/mathscinet-getitem?mr=#1}{#2}
}
\providecommand{\href}[2]{#2}
\begin{thebibliography}{{Fan}07}

\bibitem[Ari]{dima}
D.~Arinkin, \emph{{Fourier transform and middle convolution for irregular
  D-modules}}, \textbf{arXiv:0808.0699}.

\bibitem[BBE02]{beilinson}
A.~Beilinson, S.~Bloch, and H.~Esnault, \emph{{$\epsilon$}-factors for
  {G}auss-{M}anin determinants}, Mosc. Math. J. \textbf{2} (2002), no.~3,
  477--532, Dedicated to Yuri I. Manin on the occasion of his 65th birthday.
  \MR{1988970 (2004m:14011)}

\bibitem[BE04]{bloch}
S.~Bloch and H.~Esnault, \emph{Local {F}ourier transforms and rigidity for {$
  D$}-modules}, Asian J. Math. \textbf{8} (2004), no.~4, 587--605. \MR{2127940
  (2006b:14028)}

\bibitem[BV85]{varadar}
D.~G. Babbitt and V.~S. Varadarajan, \emph{Local moduli for meromorphic
  differential equations}, Bull. Amer. Math. Soc. (N.S.) \textbf{12} (1985),
  no.~1, 95--98. \MR{766962 (86g:32037)}

\bibitem[{Fan}07]{fang}
J.~{Fang}, \emph{{Calculation of local Fourier transforms for formal
  connections}}, \textbf{arXiv:0707.0090} (2007).

\bibitem[GL04]{garcialopez}
R.~Garc{\'{\i}}a~L{\'o}pez, \emph{Microlocalization and stationary phase},
  Asian J. Math. \textbf{8} (2004), no.~4, 747--768. \MR{2127946 (2005m:32014)}

\bibitem[Lev75]{levelt}
A.~H.~M. Levelt, \emph{Jordan decomposition for a class of singular
  differential operators}, Ark. Mat. \textbf{13} (1975), 1--27. \MR{0500294 (58
  \#17962)}

\bibitem[Mal91]{malgrange}
B.~Malgrange, \emph{\'{E}quations diff\'erentielles \`a coefficients
  polynomiaux}, Progress in Mathematics, vol.~96, Birkh\"auser Boston Inc.,
  Boston, MA, 1991. \MR{1117227 (92k:32020)}

\bibitem[{Sab}07]{sabbah}
C.~{Sabbah}, \emph{{An explicit stationary phase formula for the local formal
  Fourier-Laplace transform}}, \textbf{arXiv:0706.3570} (2007).

\bibitem[{Tur}55]{turrittin}
H.~L. {Turrittin}, \emph{{Convergent solutions of ordinary linear homogeneous
  differential equations in the neighborhood of an irregular singular point}},
  Acta Math. \textbf{93} (1955), 27--66.

\bibitem[vdPS03]{vander}
M.~van~der Put and M.~F. Singer, \emph{Galois theory of linear differential
  equations}, Grundlehren der Mathematischen Wissenschaften [Fundamental
  Principles of Mathematical Sciences], vol. 328, Springer-Verlag, Berlin,
  2003. \MR{1960772 (2004c:12010)}

\end{thebibliography}

%\bibitem{dima} D.~Arinkin: Fourier transform and middle convolution for irregular $\mathcal{D}$-modules, \textbf{arXiv:08080699}.
%\bibitem{beilinson} A.~Beilinson; S.~Bloch; H.~Esnault: $\epsilon$-factors for Gauss-Manin determinants, \textit{Moscow Mathematical Journal} 2(3):477-532, 2004
%\bibitem{bloch} S.~Bloch and H.~Esnault: Local Fourier transforms and rigidity for  $\mathcal{D}$-modules, \textit{Asian Journal of Mathematics} 8(4):587-605, 2004
%\bibitem{fang} J.~Fang: Calculation of local Fourier transforms for formal connections, \textbf{arXiv.0707.0090}.
%\bibitem{levelt} A.~Levelt: Jordan decomposition for a class of singular differential operators, \textit{Arkiv f\"{o}r Matematik} 13(1-2): 1-27, 1975
%\bibitem{sabbah} C.~Sabbah: An explicit stationary phase formula for the local formal Fourier-Laplace transform, \textbf{arXiv: 0706.3570}.
%\bibitem{turrittin}H.L.~Turrittin. Convergent solutions of ordinary linear homogeneous differential equations in the neighborhood of an irregular singular point, \emph{Acta Math.} 93:27-66, 1955.

\end{document}